\newtheorem{theorem}{Theorem}
\newtheorem{lemma}{Lemma}
\newcommand{\ad}{\,\mathrm{ad}\,}
\newcommand{\GL}{\,\mathrm{GL}\,}
\newcommand{\SL}{\,\mathrm{SL}\,}
\newcommand{\diag}{\,\mathrm{diag}\,}
\begin{document}

\begin{center}

{\Large {\bf Automorphisms of Chevalley groups

of type  $F_4$ over local rings with $1/2$\footnote{The
work is supported by the Russian President grant MK-2530.2008.1 and
by the grant of Russian Fond of Basic Research 08-01-00693.} }}

\bigskip
\bigskip

{\large \bf E.~I.~Bunina}

\end{center}
\bigskip

\begin{center}

{\bf Abstract.}

\end{center}

In the given paper we prove that every automorphism of a Chevalley group of type $F_4$ over a commutative local ring with~$1/2$ is standard, i.\,e., it is a composition of ring and inner automorphisms.

\bigskip

\section*{Introduction}\leavevmode

An associative commutative ring $R$ with a unit is called  \emph{local},
if it contains exactly one maximal ideal (that coincides with the radical of~$R$). Equivalently, the set of all non-invertible
elements of~$R$ is an ideal.

We describe automorphisms of Chevalley groups of type $F_4$ over local rings with~$1/2$. Note that for the root system $F_4$ there exists only one weight lattice, that is simultaneously universal and adjoint, therefore for every ring~$R$ there exists a unique Chevalley group of type $F_4$, that is
 $G(R)=G_{\ad}(F_4,R)$. Over local rings universal Chevalley groups coincide with their elementary subgroups, consequently the Chevalley group $G(R)$ is also an elementary Chevalley group.

   Theorem~1 for the root systems $A_l, D_l, $ and $E_l$ was obtained by the author in~\cite{ravnyekorni}, in~\cite{normalizers} all automorphisms of Chevalley groups of given types over local rings with~$1/2$ were described.  Theorem~1 for the root systems $B_2$ and $G_2$ is proved in~\cite{korni2}.

 Similar results for Chevalley groups over fields were proved
 by R.\,Steinberg~\cite{Stb1} for the finite case and by J.\,Humphreys~\cite{H} for the infinite case. Many papers were devoted
to description of automorphisms of Chevalley groups over different
commutative rings, we can mention here the papers of
Borel--Tits~\cite{v22}, Carter--Chen~Yu~\cite{v24},
Chen~Yu~\cite{v25}--\cite{v29}, A.\,Klyachko~\cite{Klyachko}.
 E.\,Abe~\cite{Abe_OSN} proved that all automorphisms of Chevalley groups under Noetherian  rings with~$1/2$ are standard.

The case
$A_l$ was completely studied by the papers of
W.C.~Waterhouse~\cite{v46}, V.M.~Petechuk~\cite{v12},  Fuan Li and
Zunxian Li~\cite{v37}, and also for rings without~$1/2$. The paper
of I.Z.\,Golubchik and A.V.~Mikhalev~\cite{v8} covers the
case~$C_l$, that is not considered in the present paper. Automorphisms and isomorphisms of general linear groups over arbitrary associative rings were described by E.I.~Zelmanov in~\cite{v11} and by I.Z.~Golubchik, A.V.~Mikhalev in~\cite{GolMikh1}.

 We generalize some methods of V.M.~Petechuk~\cite{Petechuk1} to prove Theorem~1.

The author is thankful to N.A.\,Vavilov,  A.A.\,Klyachko,
A.V.\,Mikhalev for valuable advices, remarks and discussions.

\section{Definitions and main theorems.}\leavevmode

 We fix the root system~$\Phi$ of the type $F_4$ (detailed texts about root
systems and their properties can be found in the books
\cite{Hamfris}, \cite{Burbaki}). Let $e_1,e_2,e_3,e_4$ be an orthonorm basis of the space $\mathbb R^4$. Then we  numerate the roots of $F_4$
as follows:
$$
\alpha_1=e_2-e_3, \alpha_2=e_3-e_4, \alpha_3=e_4,
\alpha_4=\frac{1}{2}(e_1-e_2-e_3-e_4)
$$
are simple roots;
\begin{align*}
\alpha_5&=\alpha_1+\alpha_2=e_2-e_4,\\
\alpha_6&=\alpha_2+\alpha_3=e_3,\\
\alpha_7&=\alpha_3+\alpha_4=\frac{1}{2}(e_1-e_2-e_3+e_4),\\
\alpha_8&=\alpha_1+\alpha_2+\alpha_3=e_2,\\
\alpha_9&=\alpha_2+\alpha_3+\alpha_4=\frac{1}{2}(e_1-e_2+e_3-e_4),\\
\alpha_{10}&=\alpha_2+2\alpha_3=e_3+e_4,\\
\alpha_{11}&=\alpha_1+\alpha_2+\alpha_3+\alpha_4=\frac{1}{2}(e_1+e_2-e_3-e_4),\\
\alpha_{12}&=\alpha_1+\alpha_2+2\alpha_3=e_2+e_4,\\
\alpha_{13}&=\alpha_2+2\alpha_3+\alpha_4=\frac{1}{2}(e_1-e_2+e_3+e_4),\\
\alpha_{14}&=\alpha_1+2\alpha_2+2\alpha_3=e_2+e_3,\\
 \alpha_{15}&=
\alpha_1+\alpha_2+2\alpha_3+\alpha_4=\frac{1}{2}(e_1+e_2-e_3+e_4),\\
\alpha_{16}&=\alpha_2+2\alpha_3+2\alpha_4= e_1-e_2,\\
\alpha_{17}&=\alpha_1+2\alpha_2+2\alpha_3+\alpha_4=\frac{1}{2}(e_1+e_2+e_3-e_4),\\
\alpha_{18}&= \alpha_1+\alpha_2+2\alpha_3+2\alpha_4=e_1-e_3,\\
\alpha_{19}&=\alpha_1+2\alpha_2+3\alpha_3+\alpha_4=\frac{1}{2}(e_1+e_2+e_3+e_4),\\
\alpha_{20}&=\alpha_1+2\alpha_2+2\alpha_3+2\alpha_4=e_1-e_4,\\
\alpha_{21}&=\alpha_1+2\alpha_2+3\alpha_3+2\alpha_4=e_1,\\
\alpha_{22}&=
\alpha_1+2\alpha_2+4\alpha_3+2\alpha_4=e_1+e_4,\\
\alpha_{23}&= \alpha_1+3\alpha_2+4\alpha_3+2\alpha_4=e_1+e_3,\\
\alpha_{24}&=2\alpha_1+3\alpha_2+4\alpha_3+2\alpha_4=e_1+e_2
\end{align*}
are other positive roots.

Suppose now that we have a
semisimple complex Lie algebra~$\mathcal L$ of type $F_4$ with
Cartan subalgebra~$\mathcal H$ (detailed information about
semisimple Lie algebras can be found in the book~\cite{Hamfris}).

 Then in the algebra $\mathcal L$ we can choose a \emph{Chevalley basis}
  $\{ h_i\mid i=1,\dots,4; x_\alpha\mid \alpha\in \Phi\}$ so that for
every two elements of this basis their commutator is an integral
linear combination of the elements of the same basis.

Namely,

1) $[h_i,h_j]=0;$

2) $[h_i,x_\alpha]=\langle \alpha_i,\alpha\rangle x_\alpha$;

3) if $\alpha=n_1\alpha_1+\dots+n_4\alpha_4$, then
$[x_{\alpha},x_{-\alpha}]=n_1h_1+\dots+n_4h_4$;

4) if $\alpha+\beta\notin \Phi$, then $[x_{\alpha},x_{\beta}]=0$;

5) if $\alpha+\beta\in \Phi$, and $\alpha,\beta$ are roots of the same length, then $[x_\alpha,x_\beta]=c x_{\alpha+\beta}$;

6) if $\alpha+\beta\in \Phi$, $\alpha$ is a long root, $\beta$
is a short root, then $[x_{\alpha},x_\beta]=a x_{\alpha+\beta}+b
x_{\alpha+2\beta}$.

Take now an arbitrary local ring with $1/2$ and construct an elementary adjoint Chevalley group of type $F_4$ over this ring (see, for example~\cite{Steinberg}). For our convenience we briefly put here the construction.

In the Chevalley basis of $\mathcal L$ all operators
$(x_\alpha)^k/k!$ for $k\in \mathbb N$ are written
as integral (nilpotent) matrices. An integral matrix also can be considered as
a matrix over an arbitrary commutative ring with~$1$.Let $R$ be
such a ring. Consider matrices $n\times n$ over~$R$, matrices $(x_\alpha)^k/k!$ for
 $\alpha\in \Phi$, $k\in \mathbb N$ are included in $M_n(R)$.

Now consider automorphisms of the free module $R^n$ of the form
$$
\exp (tx_\alpha)=x_\alpha(t)=1+tx_\alpha+t^2 (x_\alpha)^2/2+\dots+
t^k (x_\alpha)^k/k!+\dots
$$
Since all matrices $x_\alpha$ are nilpotent, we have that this
series is finite. Automorphisms $x_\alpha(t)$ are called
\emph{elementary root elements}. The subgroup in $Aut(R^n)$,
generated by all $x_\alpha(t)$, $\alpha\in \Phi$, $t\in R$, is
called an \emph{elementary adjoint Chevalley group} (notation:
$E_{\ad}(\Phi,R)=E_{\ad}(R)$).

In an elementary Chevalley group there are the following important elements:

--- $w_\alpha(t)=x_\alpha(t) x_{-\alpha}(-t^{-1})x_\alpha(t)$, $\alpha\in \Phi$,
$t\in R^*$;

--- $h_\alpha (t) = w_\alpha(t) w_\alpha(1)^{-1}$.

The action of $x_\alpha(t)$ on the Chevalley basis is described in
\cite{v23}, \cite{VavPlotk1}, we write it below (see Section~3).

Over local rings for the root system $F_4$ all Chevalley groups coincide with elementary adjoint Chevalley groups $E_{\ad}(R)$,
 therefore we do not introduce Chevalley
groups themselves in this paper. In this paper we denote our Chevalley groups by $G(R)$, since they depend only of a ring~$R$.

We will work with two types of standard automorphisms of a Chevalley group
 $G(R)$ and with one unusual, ``temporary'' type of automorphisms.

{\bf Ring automorphisms.} Let $\rho: R\to R$ be an automorphism of
the ring~$R$. The mapping $x\mapsto \rho (x)$ from $G(R)$
onto itself is an automorphism of the group $G(R)$, that is
denoted by the same letter~$\rho$ and is called a \emph{ring
automorphism} of the group~$G(R)$. Note that for all
$\alpha\in \Phi$ and $t\in R$ an element $x_\alpha(t)$ is mapped to
$x_\alpha(\rho(t))$.

{\bf Inner automorphisms.} Let $g\in G(R)$ be an element of a Chevalley group under consideration.
Conjugation of the group $G(R)$ with the element~$g$ is an automorphism of  $G(R)$, that is denoted by~$i_g$
and is called an  \emph{inner automorphism} of~$G(R)$.

These two types of automorphisms are called \emph{standard}. There are  central and graph automorphisms, which are also standard, but in our case (root system $F_4$) they can not appear. Therefore we say that an automorphism of the group
 $G(R)$ is standard, if it is a composition of ring and inner automorphisms.

Besides that, we need also to introduce  temporarily one more type of automorphisms:

{\bf Automorphisms--conjugations.} Let $V$ be a representation space of the Chevalley group $G(R)$, $C\in \GL(V)$ be  a matrix from the normalizer of $G(R)$:
$$
C G(R) C^{-1}= G (R).
$$
 Then the mapping $x\mapsto CxC^{-1}$ from $G(R)$ onto itself is an automorphism of the Chevalley group, which  is denoted by $i_С$ and is called an \emph{automorphism--conjugation} of~$G(R)$,
\emph{induced by the element}~$C$ of the group~$\GL(V)$.

\smallskip

In Section~5 we will prove that in our case all automorphisms--conjugations are inner, but the first step is the proof of the following theorem:

\begin{theorem}\label{first} Let $G(R)$ be a Chevalley group of type $F_4$, where $R$
is a commutative local ring with~$1/2$. Then every automorphism of $G(R)$ is a composition of a ring automorphism and an automorphism--conjugation.
\end{theorem}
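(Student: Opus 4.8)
The plan is to produce, for a given automorphism $\varphi$ of $G(R)$, an explicit matrix $C\in\GL(V)$ (where $V$ is the $52$-dimensional adjoint representation space) together with a ring automorphism $\rho$ of $R$ such that $\varphi(g)=C\rho(g)C^{-1}$ for every $g\in G(R)$. Equivalently, I regard $\varphi$ as a second $52$-dimensional representation $g\mapsto\varphi(g)$ of the abstract group $G(R)$ on $V$, and I try to show that, after twisting by a ring automorphism, it is equivalent to the standard embedding $G(R)\hookrightarrow\GL(V)$. The intertwining isomorphism is then the desired matrix $C$, and the equivalence is exactly the assertion $\varphi=i_C\circ\rho$ required by the theorem.

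The main engine is the hypothesis $1/2\in R$, which makes involutions diagonalizable. First I would fix a maximal system of pairwise commuting involutions inside the torus, for instance suitable products of the elements $h_\alpha(-1)$, generating an elementary abelian $2$-subgroup $E$. For the standard representation the torus is diagonal, so $V$ decomposes into weight spaces $V=\bigoplus_\mu V_\mu$, and each element of $E$ acts on $V_\mu$ by a sign depending on $\mu$. Applying $\varphi$, the images $\varphi(E)$ are again pairwise commuting involutions in $\GL(V)$; since $2$ is invertible over the local ring $R$, the idempotents $(1\pm\varphi(e))/2$ split $V$ into free summands and all of $\varphi(E)$ can be simultaneously diagonalized. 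Matching the simultaneous eigenspace decomposition coming from $\varphi(E)$ with the original weight decomposition produces the candidate matrix $C$, after which I replace $\varphi$ by $i_{C^{-1}}\circ\varphi$ and may assume from now on that $\varphi$ preserves the weight decomposition and carries the torus into itself.

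Next I would analyse the action of the normalised $\varphi$ on the root subgroups. Because the weight spaces are preserved and $\varphi$ must send unipotent root elements to elements normalising the torus in the prescribed way, one shows $\varphi(x_\alpha(t))=x_{\sigma(\alpha)}(\eta_\alpha(t))$ for some permutation $\sigma$ of $\Phi$ induced by a symmetry of the root system and some maps $\eta_\alpha\colon R\to R$. The additivity $x_\alpha(t)x_\alpha(s)=x_\alpha(t+s)$ forces each $\eta_\alpha$ to be additive, and the Chevalley commutator formulae together with conjugation by the torus force the $\eta_\alpha$ to agree with a single multiplicative, hence ring, automorphism $\rho$ of $R$. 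Since $R$ contains $1/2$, there are no graph or special automorphisms of $F_4$ available, so $\sigma$ must preserve root lengths and be realised inside the Weyl group; absorbing this Weyl element into a further conjugation, which is again an automorphism--conjugation, reduces $\sigma$ to the identity and yields $\varphi=i_C\circ\rho$.

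The step I expect to be the main obstacle is the construction and verification of $C$. One must check that the simultaneous eigenspaces of $\varphi(E)$ are free $R$-modules of the same ranks as the genuine weight spaces and, more seriously, that the resulting change of basis simultaneously straightens \emph{every} root subgroup rather than merely the torus; the involutions alone need not separate roots with the same sign pattern, so the eigenspace decomposition is only a first approximation to $C$. This requires a careful rank count over the local ring together with control of the off-diagonal unipotent part, and it is here that the explicit action of the $x_\alpha(t)$ on the Chevalley basis recorded in Section~3 is needed. Ruling out a genuine length-changing symmetry $\sigma$ and confirming that the induced $\rho$ is a bijective ring homomorphism, and not merely an additive map, are the remaining delicate points.
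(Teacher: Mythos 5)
Your overall shape (diagonalize commuting involutions from the torus, normalize the root subgroups, extract a ring automorphism $\rho$) does match the paper's strategy, but you are missing the one idea that makes this work over a local ring rather than a field: the reduction modulo the radical. The paper first composes $\varphi$ with an inner automorphism so that the resulting isomorphism becomes a ring automorphism $\overline\rho$ after factoring $R$ by its radical $J$ (this is imported from Section~2 of the author's earlier papers and ultimately rests on the known field case of Steinberg/Humphreys). Everything downstream depends on this: it is what guarantees that the images of the involutions $h_{\alpha_i}(-1)$ have eigenspaces that are free of the \emph{correct} ranks, that every subsequent change of basis is congruent to the identity mod $J$ (hence invertible), and that the various entries one must divide by ($d$, $g$, $y_{17}$, $z_9$, \dots) are units because they are congruent to $\pm1$ or $\pm1/2$ mod $J$. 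You acknowledge the rank count as an obstacle but offer no mechanism for it; without the mod-$J$ normalization the simultaneous eigenspace decomposition of $\varphi(E)$ need not match the weight decomposition at all. Likewise, your step ``$\varphi(x_\alpha(t))=x_{\sigma(\alpha)}(\eta_\alpha(t))$ for a permutation $\sigma$ of $\Phi$'' is a leap: preserving the weight decomposition and normalizing the torus only forces the image of $x_\alpha(1)$ into a block form, and pinning it down to an actual root element requires the long explicit computations with the $w_i$ and the Steinberg relations that occupy Sections~2--3. In the paper there is no $\sigma$ to rule out, precisely because the mod-$J$ congruence already forces the image of $x_\alpha(1)$ to be congruent to $x_\alpha(1)$; your alternative route via ``no graph automorphisms of $F_4$'' and absorbing a Weyl element is a field-level argument that does not by itself control what happens over $R$.

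A second genuine gap is the surjectivity of $\rho$. After the normalization one gets an injective ring homomorphism $\rho\colon R\to R'$ onto a subring with $CG(R)C^{-1}=G(R')$, and the paper closes the argument with a specific device: Lemma~\ref{porozhd} shows $G(R)$ generates the full matrix ring $M_{52}(R)$ (by explicitly manufacturing all matrix units from $(x_{\alpha_1}(1)-1)^2$, $(x_{\alpha_4}(1)-1)^2$ and Weyl elements), whence $CM_n(R)C^{-1}=M_n(R')$ forces $R'=R$. You flag bijectivity of $\rho$ as a ``remaining delicate point'' but propose no argument; this is not a routine verification and needs to be supplied. The additivity and multiplicativity of $\rho$ you describe are handled essentially as in the paper (multiplicativity on units via conjugation by $h_\alpha(t)$, then extension using that every element of $R$ is a sum of two units), so that part is fine.
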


Sections 2--4 are  devoted to the proof of Theorem~1.

\section{Changing the initial automorphism to a special isomorphism, images of~$w_{\alpha_i}$}\leavevmode

Since in the papers \cite{ravnyekorni} and~\cite{korni2} the root system in there second sections was arbitrary, we can suppose all results of these sections to be proved also for our root system~$F_4$.

Namely, by the fixed automorphism $\varphi$ we can construct a mapping $\varphi'= i_{g^{-1}}  \varphi$, which is an isomorphism of the group  $G(R)\subset \GL_n(R)$ onto some
subgroup of $\GL_n(R)$ with the property that its image under factorization $R$ by $J$ (the radical of~$R$) coinsides with a ring automorphism $\overline \rho$.

Besides, from sections~2 of the same papers we know that the image of any involution (a matrix of order~$2$) under suah an isomorphism is conjugate to this involution in the group $\GL_n(R)$.

These are the main facts that we need to know.

The order of roots we have fixed in the previous section.

The basis of the space~$V$ ($52$-dimensional) we numerate as
$v_i=x_{\alpha_i}$, $v_{-i}=x_{-\alpha_i}$, $V_1=h_{1}$,\dots ,
$V_4=h_4$.

Consider the matrices $h_{\alpha_1}(-1),\dots, h_{\alpha_4}(-1)$ in our
basis. They have the form
\begin{align*}
h_{\alpha_1}(-1)&=\diag [1,1,-1,-1,1,1,1,1, -1,-1,-1,-1,1,1,
 -1,-1,-1,-1,-1,-1,
  -1,-1,\\
  &\quad -1,-1,-1,-1,1,1,
 -1,-1,-1,-1,1,1,
-1,-1,1,1,1,1,1,1,1,1,-1,-1,-1, -1, 1,1,1,1],\\
h_{\alpha_2}(-1)&=\diag [-1,-1,1,1,-1,-1,1,1, -1,-1,-1,-1,-1,-1,
 1,1,-1,-1,1,1,
  1,1,-1,-1,\\
  & \quad 1,1,-1,-1,
 -1,-1,1,1,-1,-1,
-1,-1,1,1,-1,-1,1,1,-1,-1,-1,-1,1,1,1,1,1,1],\\
h_{\alpha_3}(-1)&=\diag [1,1,1,1,1,1,-1,-1, 1,1,1,1,-1,-1,
 1,1,-1,-1,1,1,\\
 & \quad -1,-1,1,1,-1,-1,1,1,
 -1,-1,1,1,-1,-1,
1,1,-1,-1,1,1,1,1,1,1,1,1,1,1,1,1,1,1],\\
h_{\alpha_4}(-1)&=\diag [1,1,1,1,-1,-1,1,1, 1,1,-1,-1,-1,-1,
 -1,-1,-1,-1,1,1,\\
 & \quad -1,-1,1,1,1,1,1,1,
 1,1,1,1,1,1,
1,1,-1,-1,1,1,-1,-1,1,1,1,1,1,1,1,1,1,1].
\end{align*}
As we see, for all~$i$ we have
$h_{\alpha_i}(-1)^2=1$.

We know that every matrix $h_i=\varphi'(h_{\alpha_i}(-1))$ in some basis is diagonal with $\pm 1$ on its diagonal, and
the number of  $1$ and
$-1$ coincides with their number for the matrix $h_{\alpha_i}(-1)$. Since all matrices $h_i$ commute, then there exists a basis, where all
$h_i$ has the same form as $h_{\alpha_i}(-1)$ in the initial basis from weight vectors. Suppose that we came to this basis with the help of the matrix~$g_1$. Clear that
 $g_1\in \GL_n(R,J)=\{ X\in \GL_n(R)\mid X-E\in M_n(J)\}$. Consider the mapping
 $\varphi_1=i_{g_1}^{-1} \varphi'$. It is also an isomorphism of the group
 $G(R)$ onto some subgroup of $\GL_n(R)$ such that its image under factorization $R$ by~$J$ is~$\overline \rho$, and
$\varphi_1(h_{\alpha_i}(-1))=h_{\alpha_i}(-1)$ for all
$i=1,\dots,4$.

Instead of $\varphi'$ we now consider the isomorphism~$\varphi_1$.

Every element $w_i=w_{\alpha_i}(1)$ moves by conjugation  $h_i$ to each other, therefore its image has a block-monomial form. In particular, this image can be rewritten as a block-diagonal matrix, where the first block is $48\times 48$, and the second is $4\times 4$.

Consider the first basis vector after the last basis change. Denote it by~$e$. The Weil group  $W$
acts transitively on the set of roots of the same length, therefore for every
root~$\alpha_i$ of the same length as the first one,
there exists such $w^{(\alpha_i)}\in W$, that $w^{(\alpha_i)}
\alpha_1=\alpha_i$. Similarly, all roots of the second length are also conjugate under the action
of~$W$. Let $\alpha_k$ be the first root of the length that is not equal to the length of~$\alpha_1$, and let $f$ be
the $k$-th basis vector after the last basis change. If $\alpha_j$ is a root conjugate to $\alpha_k$, then let us denote by $w_{(\alpha_j)}$ an element of~$W$ such that
$w_{(\alpha_j)} \alpha_k=\alpha_j$. Consider now the basis
$e_1,\dots, e_{48}, e_{49},\dots, e_{52}$, where $e_1=e$, $e_k=f$,
for $1< i\leqslant 48$ either $e_i=\varphi_1(w^{(\alpha_i)})e$, or
$e_i=\varphi_1(w_{(\alpha_i)})f$ (it depends of the length of $\alpha_k$); for $48< i\leqslant 52$ we do not move $e_i$. Clear that the matrix of this basis change is equivalent to the unit modulo radical. Therefore the obtained set of vectors also is a basis.

Clear that a matrix for $\varphi_1(w_i)$ ($i=1,\dots,4$) in the basis part $\{ e_1,\dots,e_{2n}\}$
 coincides with the matrix for $w_i$ in the initial basis of weight vectors.
Since $h_i(-1)$ are squares of $w_i$, then there images are not changed in the new basis.

Besides, we know that every matrix $\varphi_1(w_i)$
is block-diagonal up to decomposition of basis in the first  $48$
and last $4$ elements. Therefore the last part of basis consisting of $4$ elements, can be changed independently.

Initially (in the basis of weight vectors)  $w_i$ in this basis part are

$$
w_1:
\begin{pmatrix}
-1& 1& 0&  0\\
0& 1& 0& 0\\
0& 0& 1 &0\\
0& 0& 0&  1
\end{pmatrix},\quad
w_2:
\begin{pmatrix}
1& 0& 0&  0\\
1& -1& 1& 0\\
0& 0& 1&0\\
0& 0& 0&  1
\end{pmatrix},\quad
w_3:
\begin{pmatrix}
1& 0& 0& 0\\
0& 1& 0& 0\\
0& 2& -1&1\\
0& 0& 0&  1
\end{pmatrix},\quad
w_4:
\begin{pmatrix}
1& 0& 0& 0\\
0& 1& 0& 0\\
0& 0& 1&0\\
0& 0& 1& -1
\end{pmatrix}.
$$

We have the following conditions for these elements (on the given basis part):

1) for all $i$ $w_i^2=E$;

2) $w_i$ and $w_j$ commute for $|i-j|> 1$;

3) $w_1 w_2$ and $w_3w_4$ have order~$3$, $w_2w_3$ has order~$2$.

Therefore the images
$\varphi_1(w_i)$ satisfy the same conditions. Besides, we know, that these images are  equivalent to the initial~$w_i$
modulo radical~$J$.

Let us make the basis change with the matrix, which is a product of (commuting with each other) matrices
$$
\begin{pmatrix}
1& 1/2& 0&  0\\
0& 1& 0&  0\\
0& 0& 1& 0\\
0& 0& 0& 1
\end{pmatrix}\text{ и }
\begin{pmatrix}
1& 0& 0&  0\\
0& 1& 0&  0\\
0& 1& 1& 1\\
0& 0& 0& 2
\end{pmatrix}.
$$

In this basis $w_1=\diag[-1,1,1,1]$, $w_3 =\diag[1,1,-1,1]$,
$$
w_2=\begin{pmatrix}
1/2& 1/4& -1/2& -1/2\\
1& 1/2& 1&1\\
-1& 1/2& 0&-1\\
0& 0& 0& 1
\end{pmatrix},
\quad w_4=\begin{pmatrix}
1& 0& 0& 0\\
0& 1& 0&0\\
0& -1/2& 1/2& 3/2\\
0& 1/2& 1/2& -1/2
\end{pmatrix}.
$$

Consider now the images of $\varphi_1(w_i)$ in the changed basis. All these images are involutions, and every of them has exactly one $-1$ in its diagonal form, also $\varphi_1(w_1)$ and $\varphi_1(w_3)$
commute. Hence we can choose such a basis (equivalent to the previous one modulo~$J$), where
$\varphi_1(w_1)$ and $\varphi_1(w_3)$ have a diagonal form with one
$-1$ on the corresponding places.

Consider now where  $w_4$ can move under this basis change.

Since $\varphi_1(w_4)$ commutes with $\varphi_1(w_1)$, has order two and is equivalent to $w_4$
modulo radical, we have
$$
\varphi_1(w_4)=\begin{pmatrix}
1& 0& 0& 0\\
0& a& b& c\\
0& d& e& f\\
0& g& h& i
\end{pmatrix}.
$$
Use the facts that  $\varphi_1(w_4)^2=E$, $\varphi_1(w_3w_4)$ has order~$3$. Then we obtain
$$
\begin{cases}
ad+de+fg=0,\\
ad-de+fg=-d
\end{cases},
$$
therefore $2de=d$,  and since $d\equiv 1/2 \mod J$, we have $e=1/2$. Moreover,
$$
\begin{cases}
ag+dh+gi=0,\\
ag-dh+gi=g
\end{cases}
$$
consequently $2g(a+i)=g$, i.\,e., $a+i=1/2$. Make now a basis change with the matrix
$$
\begin{pmatrix}
1& 0& 0& 0\\
0& 1& 0& \frac{a-1}{g}\\
0& 0& 1& 0\\
0& 0& 0& 1
\end{pmatrix}.
$$
This change does not move the elements $\varphi_1(w_1)$ and $\varphi_1(w_3)$,
and  $\varphi_1(w_4)$ now has the form
$$
\begin{pmatrix}
1& 0& 0& 0\\
0& 1& b& c\\
0& d& 1/2& f\\
0& g& h& -1/2
\end{pmatrix}.
$$
Using the above conditions, we obtain the equation
$bg+eh+hi=0$, consequently $bg=0$, i.\,e., $b=0$. In this case from $a^2+bd+cg=1$ it follows $c=0$.
All other conditions gives the system
$$
\begin{cases}
fg=-3/2d,\\
dh=-g/2,\\
fh=-1/4.
\end{cases}
$$
Clear that with a diagonal basis change (which does not move
$\varphi_1(w_1)$ and $\varphi_1(w_3)$) we can come to a basis, where
$\varphi_1(w_4)$ has the same form as $w_4$ after the first our basis change. Making now the inverse basis change, we obtain that
$\varphi_1(w_1)$, $\varphi_1(w_3)$ and $\varphi(w_4)$ have the same form as $w_1,w_3,w_4$, respectively. Look at $\varphi_1(w_2)$.

Since $\varphi_1(w_2)$ commutes with $\varphi_1(w_4)$,
we have
$$
\varphi_1(w_2)=
\begin{pmatrix}
a&b&c& 0\\
d& e& f& 0\\
g& i& h& 0\\
g/2& i/2& k& h-2k
\end{pmatrix}.
$$
Since $(h-2k)^2=1$, we have $h-2k=1$. Now similarly to the consideration of $\varphi_1(w_4)$, we take the conditions for $\varphi_1(w_2)$. After suitable diagonal change we get $\varphi_1(w_i)=w_i$ in the new last basis.

Therefore we can now come from the isomorphism~$\varphi_1$ under consideration to an isomorphism $\varphi_2$ with all properties of $\varphi_1$ and such that
$\varphi_2(w_i)=w_i$ for all $i=1,\dots,4$.

We suppose now that an isomorphism~$\varphi_2$ with all these properties is given.

\section{Images of $x_{\alpha_i}(1)$ and diagonal matrices.}

Let us write the matrices $w_i$, $i=1,\dots,4$:
{\small
\begin{multline*}
w_1= -e_{\alpha_1,-\alpha_1}-e_{-\alpha_1,\alpha_1}+e_{\alpha_2,\alpha_5}+e_{-\alpha_2,-\alpha_5}-e_{\alpha_5,\alpha_2}-e_{-\alpha_5,\alpha_2}+\\
+e_{\alpha_3,\alpha_3}+e_{-\alpha_3,-\alpha_3}+e_{\alpha_4,\alpha_4}+e_{-\alpha_4,-\alpha_4}+e_{\alpha_6,\alpha_8}+e_{-\alpha_6,-\alpha_8}-
e_{\alpha_8,\alpha_6}-e_{-\alpha_8,-\alpha_6}+\\
+e_{\alpha_7,\alpha_7}+e_{-\alpha_7,-\alpha_7}+e_{\alpha_9,\alpha_{11}}+e_{-\alpha_9,-\alpha_{11}}-
e_{\alpha_{11},\alpha_9}-e_{-\alpha_{11},-\alpha_9}+\\
+e_{\alpha_{10},\alpha_{12}}+e_{-\alpha_{10},-\alpha_{12}}-
e_{\alpha_{12},\alpha_{10}}-e_{-\alpha_{12},-\alpha_{10}}+e_{\alpha_{13},\alpha_{15}}+e_{-\alpha_{13},-\alpha_{15}}-
e_{\alpha_{15},\alpha_{13}}-e_{-\alpha_{15},-\alpha_{13}}+\\
+e_{\alpha_{14},\alpha_{14}}+e_{-\alpha_{14},-\alpha_{14}}+e_{\alpha_{16},\alpha_{18}}+e_{-\alpha_{16},-\alpha_{18}}-
e_{\alpha_{18},\alpha_{16}}-e_{-\alpha_{18},-\alpha_{16}}+\\
+e_{\alpha_{17},\alpha_{17}}+e_{-\alpha_{17},-\alpha_{17}}+e_{\alpha_{19},\alpha_{19}}+e_{-\alpha_{19},-\alpha_{19}}+
e_{\alpha_{20},\alpha_{20}}+e_{-\alpha_{20},-\alpha_{20}}+\\
+e_{\alpha_{21},\alpha_{21}}+e_{-\alpha_{21},-\alpha_{21}}
+e_{\alpha_{22},\alpha_{22}}+e_{-\alpha_{22},-\alpha_{22}}+
+e_{\alpha_{23},\alpha_{24}}+e_{-\alpha_{23},-\alpha_{24}}-\\
-e_{\alpha_{24},\alpha_{23}}-e_{-\alpha_{24},-\alpha_{23}}-e_{h_1,h_1}+e_{h_1,h_2}+e_{h_2,h_2}+e_{h_3,h_3}+e_{h_4,h_4};
\end{multline*}

\begin{multline*}
w_2= -e_{\alpha_2,-\alpha_2}-e_{-\alpha_2,\alpha_2}+e_{\alpha_1,\alpha_5}+e_{-\alpha_1,-\alpha_5}-e_{\alpha_5,\alpha_1}-e_{-\alpha_5,\alpha_1}+\\
-e_{\alpha_3,\alpha_6}-e_{-\alpha_3,-\alpha_6}+
e_{\alpha_6,\alpha_3}+e_{-\alpha_6,-\alpha_3}+e_{\alpha_4,\alpha_4}+e_{-\alpha_4,-\alpha_4}+\\
+e_{\alpha_7,\alpha_9}+e_{-\alpha_7,-\alpha_9}-
e_{\alpha_9,\alpha_7}-e_{-\alpha_9,-\alpha_7}
+e_{\alpha_8,\alpha_8}+e_{-\alpha_8,-\alpha_8}+e_{\alpha_{10},\alpha_{10}}+e_{-\alpha_{10},-\alpha_{10}}+
e_{\alpha_{11},\alpha_{11}}+e_{-\alpha_{11},-\alpha_{11}}+\\
+e_{\alpha_{12},\alpha_{14}}+e_{-\alpha_{12},-\alpha_{14}}-
e_{\alpha_{14},\alpha_{12}}-e_{-\alpha_{14},-\alpha_{12}}+e_{\alpha_{13},\alpha_{13}}+e_{-\alpha_{13},-\alpha_{13}}+\\
+e_{\alpha_{15},\alpha_{17}}+e_{-\alpha_{15},-\alpha_{17}}-
e_{\alpha_{17},\alpha_{15}}-e_{-\alpha_{17},-\alpha_{15}}+e_{\alpha_{16},\alpha_{16}}+e_{-\alpha_{16},-\alpha_{16}}+\\
+e_{\alpha_{18},\alpha_{20}}+e_{-\alpha_{18},-\alpha_{20}}-
e_{\alpha_{20},\alpha_{18}}-e_{-\alpha_{20},-\alpha_{18}}+e_{\alpha_{19},\alpha_{19}}+e_{-\alpha_{19},-\alpha_{19}}+\\
+e_{\alpha_{21},\alpha_{21}}+e_{-\alpha_{21},-\alpha_{21}}+e_{\alpha_{22},\alpha_{23}}+e_{-\alpha_{22},-\alpha_{23}}-
e_{\alpha_{23},\alpha_{22}}-e_{-\alpha_{23},-\alpha_{22}}+\\
+e_{\alpha_{24},\alpha_{24}}+e_{-\alpha_{24},-\alpha_{24}}+e_{h_1,h_1}+e_{h_2,h_1}-e_{h_2,h_2}+e_{h_2,h_3}+e_{h_3,h_3}+e_{h_4,h_4};
\end{multline*}

\begin{multline*}
w_3= e_{\alpha_1,-\alpha_1}+e_{-\alpha_1,\alpha_1}+e_{\alpha_2,\alpha_{10}}+e_{-\alpha_2,-\alpha_{10}}+e_{\alpha_{10},\alpha_2}+e_{-\alpha_{10},\alpha_2}-\\
-e_{\alpha_3,-\alpha_3}-e_{-\alpha_3,\alpha_3}+e_{\alpha_4,\alpha_7}+e_{-\alpha_4,-\alpha_7}-
e_{\alpha_7,\alpha_4}-e_{-\alpha_7,-\alpha_4}+\\
+e_{\alpha_5,\alpha_{12}}+e_{-\alpha_5,-\alpha_{12}}+
e_{\alpha_{12},\alpha_5}+e_{-\alpha_{12},-\alpha_5}
-e_{\alpha_6,\alpha_6}-e_{-\alpha_6,-\alpha_6}-e_{\alpha_{8},\alpha_{8}}-e_{-\alpha_{8},-\alpha_{8}}+
\\
+e_{\alpha_{9},\alpha_{13}}+e_{-\alpha_{9},-\alpha_{13}}-
e_{\alpha_{13},\alpha_{9}}-e_{-\alpha_{13},-\alpha_{9}}+e_{\alpha_{11},\alpha_{15}}+e_{-\alpha_{11},-\alpha_{15}}-
e_{\alpha_{15},\alpha_{11}}-e_{-\alpha_{15},-\alpha_{11}}+\\
+e_{\alpha_{14},\alpha_{14}}+e_{-\alpha_{14},-\alpha_{14}}+e_{\alpha_{16},\alpha_{16}}+e_{-\alpha_{16},-\alpha_{16}}+\\
+e_{\alpha_{17},\alpha_{19}}+e_{-\alpha_{17},-\alpha_{19}}-
e_{\alpha_{19},\alpha_{17}}-e_{-\alpha_{19},-\alpha_{17}}+e_{\alpha_{18},\alpha_{18}}+e_{-\alpha_{18},-\alpha_{18}}+\\
+e_{\alpha_{20},\alpha_{22}}+e_{-\alpha_{20},-\alpha_{22}}+
e_{\alpha_{22},\alpha_{20}}+e_{-\alpha_{22},-\alpha_{20}}-e_{\alpha_{21},\alpha_{21}}-e_{-\alpha_{21},-\alpha_{21}}+e_{\alpha_{23},\alpha_{23}}
+e_{-\alpha_{23},-\alpha_{23}}+\\
+e_{\alpha_{24},\alpha_{24}}+e_{-\alpha_{24},-\alpha_{24}}+e_{h_1,h_1}+e_{h_2,h_2}+2e_{h_3,h_2}-e_{h_3,h_3}+e_{h_3,h_4}+e_{h_4,h_4};
\end{multline*}

\begin{multline*}
w_4= e_{\alpha_1,-\alpha_1}+e_{-\alpha_1,\alpha_1}+e_{\alpha_2,-\alpha_2}+e_{-\alpha_2,\alpha_2}-e_{\alpha_3,\alpha_{7}}
-e_{-\alpha_3,-\alpha_{7}}
+e_{\alpha_{7},\alpha_3}+e_{-\alpha_{7},\alpha_3}-\\
-e_{\alpha_4,-\alpha_4}-e_{-\alpha_4,\alpha_4}+e_{\alpha_5,\alpha_5}+e_{-\alpha_5,-\alpha_5}
+e_{\alpha_6,\alpha_{9}}+e_{-\alpha_6,-\alpha_{9}}-
e_{\alpha_{9},\alpha_6}-e_{-\alpha_{9},-\alpha_6}+\\
+e_{\alpha_8,\alpha_{11}}+e_{-\alpha_8,-\alpha_{11}}-e_{\alpha_{11},\alpha_{8}}-e_{-\alpha_{11},-\alpha_{8}}
+e_{\alpha_{10},\alpha_{16}}+e_{-\alpha_{10},-\alpha_{16}}+
e_{\alpha_{16},\alpha_{10}}+e_{-\alpha_{16},-\alpha_{10}}+\\
+e_{\alpha_{12},\alpha_{18}}+e_{-\alpha_{12},-\alpha_{18}}+
e_{\alpha_{18},\alpha_{12}}+e_{-\alpha_{18},-\alpha_{12}}-
-e_{\alpha_{13},\alpha_{13}}-e_{-\alpha_{13},-\alpha_{13}}-e_{\alpha_{15},\alpha_{15}}-e_{-\alpha_{15},-\alpha_{15}}-\\
-e_{\alpha_{17},\alpha_{17}}-e_{-\alpha_{17},-\alpha_{17}}+e_{\alpha_{14},\alpha_{20}}+e_{-\alpha_{14},-\alpha_{20}}+
e_{\alpha_{20},\alpha_{14}}+e_{-\alpha_{20},-\alpha_{14}}+\\
+e_{\alpha_{19},\alpha_{21}}+e_{-\alpha_{19},-\alpha_{21}}-
e_{\alpha_{21},\alpha_{19}}-e_{-\alpha_{21},-\alpha_{19}}+e_{\alpha_{22},\alpha_{22}}+e_{-\alpha_{22},-\alpha_{22}}+e_{\alpha_{23},\alpha_{23}}
+e_{-\alpha_{23},-\alpha_{23}}+\\
+e_{\alpha_{24},\alpha_{24}}+e_{-\alpha_{24},-\alpha_{24}}+e_{h_1,h_1}+e_{h_2,h_2}+e_{h_3,h_3}+e_{h_4,h_3}-e_{h_4,h_4}.
\end{multline*}

}

Besides that, $x_{\alpha_1}(t)=E+tX_1+t^2X_1^2/2$, where
\begin{multline*}
X_1=2e_{\alpha_1,h_1}-e_{\alpha_1,h_2}-e_{h_1,-\alpha_1}+e_{\alpha_5,\alpha_2}-e_{-\alpha_2,-\alpha_5}
+e_{\alpha_8,\alpha_6}-e_{-\alpha_6,-\alpha_8}+\\
+e_{\alpha_{11},\alpha_9}-e_{-\alpha_9,-\alpha_{11}}+e_{\alpha_{12},\alpha_{10}}-e_{-\alpha_{10},-\alpha_{12}}+e_{\alpha_{15},\alpha_{13}}-
e_{-\alpha_{13},-\alpha_{15}}+\\
+e_{\alpha_{18},\alpha_{16}}-e_{-\alpha_{16},-\alpha_{18}}+e_{\alpha_{24},\alpha_{23}}-e_{-\alpha_{23},-\alpha_{24}};
\end{multline*}
$x_{\alpha_3}(t)=E+tX_3+t^2X_3^2/2$, where
\begin{multline*}
X_3=-2e_{\alpha_3,h_2}+2e_{\alpha_3,h_3}-e_{\alpha_3,h_4}-e_{h_3,-\alpha_3}+e_{\alpha_7,\alpha_4}-e_{-\alpha_4,-\alpha_7}+\\
+e_{\alpha_{13},\alpha_9}-e_{-\alpha_9,-\alpha_{13}}+e_{\alpha_{15},\alpha_{11}}-e_{-\alpha_{11},-\alpha_{15}}+e_{\alpha_{19},\alpha_{17}}
-e_{-\alpha_{17},-\alpha_{19}}-\\
-2e_{\alpha_6,\alpha_2}+e_{-\alpha_2,-\alpha_6}-e_{\alpha_{10},\alpha_6}+2e_{-\alpha_6,-\alpha_{10}}-2e_{\alpha_8,\alpha_5}+e_{-\alpha_5,-\alpha_8}-\\
-e_{\alpha_{12},\alpha_8}+2e_{-\alpha_8,\alpha_{12}}-2e_{\alpha_{21},\alpha_{20}}+e_{-\alpha_{20},-\alpha_{21}}-e_{\alpha_{22},\alpha_{21}}+
2e_{-\alpha_{21},-\alpha_{22}}.
\end{multline*}

We are interested in images of $x_{\alpha_i}(t)$. Let
$\varphi_2(x_{\alpha_1}(1))=x_1=(y_{i,j})$. Since $x_1$ commutes with all
$h_{\alpha_i}(-1)$, $i=1,3,4$, and also with $w_3$, $w_4$,  and $w_{14}$,  then by direct calculus we obtain:

1. The matrix  $x_1$ can be decomposed into following eight diagonal blocks:
\begin{align*}
B_1&=\{ v_{1}, v_{-1}, v_{{14}}, v_{-{14}},v_{{20}}, v_{-{20}}, v_{{22}}, v_{-{22}}, V_{1}, V_{2}, V_{3}, V_{4}\};\\
 B_2&=\{ v_{2}, v_{-2}, v_{5}, v_{-5}, v_{{10}}, v_{-{10}}, v_{{16}}, v_{-{16}},   v_{{18}}, v_{-{18}}, v_{{23}}, v_{-{23}}, v_{{24}}, v_{-{24}}\};\\
 B_3&=\{v_{3}, v_{-3}, v_{{21}}, v_{-{21}}\};\\
  B_4&=\{ v_{4}, v_{-4}, v_{{17}}, v_{-{17}}\};\\
B_5&=\{ v_{6}, v_{-6}, v_{8}, v_{-8}\};\\
B_6&=\{ v_{7}, v_{-7}, v_{{19}}, v_{-{19}}\};\\
B_7&=\{ v_{9}, v_{-9}, v_{{11}}, v_{-{11}}\};\\
B_8&=\{ v_{{13}}, v_{-{13}}, v_{{15}}, v_{-{15}}\}.
\end{align*}

2. On the block $B_1$ the matrix $x_1$ has the form
{\tiny
$$
\left(\begin{array}{cccccccccccc}
y_1& y_2& -y_3& y_3& -y_3& y_3& -y_3& y_3& -2y_4& y_4& 0& 0\\
y_5& y_6& -y_7& y_7& -y_7& y_7& -y_7& y_7& -2y_8& y_8& 0& 0\\
y_9& y_{10}& y_{11}& y_{12}& -y_{13}& y_{13}& -y_{13}& y_{13}& -2y_{14}+2y_{15}&
y_{14}& 0& -y_{15}\\
-y_9& -y_{10}& y_{12}& y_{11}& y_{13}& -y_{13}& y_{13}& -y_{13}& 2y_{14}-2y_{15}&
-y_{14}+2y_{15}& 0& y_{15}\\
y_9& y_{10}& -y_{13}& y_{13}& y_{11}& y_{12}& -y_{13}& y_{13}& 2(-y_{14}+y_{15})& y_{14}& -y_{15}& y_{15}\\
-y_9& -y_{10}& y_{13}& -y_{13}& y_{12}& y_{11}& y_{13}& -y_{13}& 2(y_{14}-y_{15})& -y_{14}+2y_{15}& -y_{15}& y_{15}\\
y_9& y_{10}& -y_{13}& y_{13}& -y_{13}& y_{13}& y_{11}& y_{12}&  2(-y_{14}+y_{15})& -y_{14}+2y_{15}& y_{15}& 0\\
-y_9& -y_{10}& y_{13}& -y_{13}& y_{13}& -y_{13}& y_{12}& y_{11}&  2(y_{14}-y_{15})& -y_{14}& y_{15}& 0\\
y_{16}& y_{17}& -y_{18}& y_{18}& -y_{18}& y_{18}& -y_{18}& y_{18}& y_{19}-2y_{20}& y_{20}& 0& 0\\
0& 0& 0& 0& 0& 0& 0& 0& 0& y_{20}& 0& 0\\
0& 0& 0& 0& 0& 0& 0& 0& 0& 0& y_{20}& 0\\
0& 0& 0& 0& 0& 0& 0& 0& 0& 0& 0& y_{20}
\end{array}\right).
$$
}

3. On the block $B_2$ it is
{\tiny
$$
\left(\begin{array}{cccccccccccccccc}
y_{21}&  y_{22}& -y_{23}& -y_{24}& -y_{25}& -y_{26}& y_{27}& y_{28}& -y_{25}& -y_{26}&
y_{27}& y_{28}& y_{28}& y_{27}& y_{26}& y_{25}\\
y_{29}& y_{30}& -y_{31}& -y_{32}& -y_{25}& -y_{33}& y_{34}& y_{35}& -y_{25}& -y_{33}&
y_{34}& y_{35}& y_{35}& y_{34}& y_{33}& y_{28}\\
y_{32}& y_{31}& y_{30}& y_{29}& -y_{35}& -y_{34}& -y_{33}& -y_{25}& -y_{35}& -y_{34}&
-y_{33}& -y_{25}& -y_{25}& -y_{33}& y_{34}& y_{35}\\
y_{24}& y_{23}& y_{22}& y_{21}& -y_{28}& -y_{27}& -y_{26}& -y_{25}& -y_{28}& -y_{27}&
-y_{26}& -y_{25}& -y_{25}& -y_{26}& y_{27}& y_{28}\\
-y_{25}& -y_{26}& y_{27}& y_{28}& y_{21}& y_{22}& -y_{23}& -y_{24}& -y_{25}& -y_{26}& y_{27}& y_{28}& y_{28}& y_{27}& y_{26}& y_{25}\\
-y_{25}& -y_{33}& y_{34}& y_{35}& y_{29}& y_{30}& -y_{31}& -y_{32}& -y_{25}& -y_{33}& y_{34}& y_{35}& y_{35}& y_{34}& y_{33}& y_{25}\\
-y_{35}& -y_{34}& -y_{33}& -y_{25}& y_{32}& y_{31}& y_{30}& y_{29}& -y_{35}& -y_{34}& -y_{33}& -y_{25}& -y_{25}& -y_{33}& y_{34}& y_{35}\\
-y_{28}& -y_{27}& -y_{26}& -y_{25}& y_{24}& y_{23}& y_{22}& y_{21}& -y_{28}& -y_{27}& -y_{26}& -y_{25}& -y_{25}& -y_{26}& y_{27}& y_{28}\\
-y_{25}& -y_{26}& y_{27}& y_{28}& -y_{25}& -y_{26}& y_{27}& y_{28}& y_{21}& y_{22}& -y_{23}& -y_{24}& y_{28}& y_{27}& y_{26}& y_{25}\\
-y_{25}& -y_{33}& y_{34}& y_{35}& -y_{25}& -y_{33}& y_{34}& y_{35}& y_{29}& y_{30}& -y_{31}& -y_{32}& y_{35}& y_{34}& y_{33}& y_{25}\\
-y_{35}& -y_{34}& -y_{33}& -y_{25}& -y_{35}& -y_{34}& -y_{33}& -y_{25}& y_{32}& y_{31}&
y_{30}& y_{29}& -y_{25}& -y_{33}& y_{34}& y_{35}\\
-y_{28}& -y_{27}& -y_{26}& -y_{25}& -y_{28}& -y_{27}& -y_{26}& -y_{25}& y_{24}& y_{23}&
y_{22}& y_{21}& -y_{25}& -y_{26}& y_{27}& y_{28}\\
-y_{28}& -y_{27}& -y_{26}& -y_{25}& -y_{28}& -y_{27}& -y_{26}& -y_{25}& -y_{28}& -y_{27}&
-y_{26}& -y_{25}& y_{21}& y_{22}& -y_{23}& -y_{24}\\
-y_{35}& -y_{34}& -y_{33}& -y_{25}& -y_{35}& -y_{34}& -y_{33}& -y_{25}& -y_{35}& -y_{34}&
-y_{33}& -y_{25}& y_{29}& y_{30}& -y_{31}& -y_{32}\\
y_{25}& y_{33}& -y_{34}& -y_{35}& y_{25}& y_{33}& -y_{34}& -y_{35}& y_{25}& y_{33}&
-y_{34}& -y_{35}& y_{32}& y_{31}& y_{30}& y_{29}\\
y_{25}& y_{26}& -y_{27}& -y_{28}& y_{25}& y_{26}& -y_{27}& -y_{28}& y_{25}& y_{26}&
-y_{27}& -y_{28}& y_{24}& y_{23}& y_{22}& y_{21}
\end{array}\right).
$$
}

4. On the blocks $B_3$, $B_4$, $B_6$ it has the form
$$
\begin{pmatrix}
y_{36}& y_{37}& y_{38}& y_{38}\\
y_{37}& y_{36}& y_{38}& y_{38}\\
-y_{38}& -y_{38}& y_{36}& y_{37}\\
-y_{38}& -y_{38}& y_{37}& y_{36}
\end{pmatrix}.
$$

5. Finally, on the blocks $B_5$, $B_7$, $B_8$ it is
$$
\begin{pmatrix}
y_{39}& y_{40}& y_{41}& y_{42}\\
y_{43}& y_{44}& y_{45}& y_{46}\\
-y_{46}& -y_{45}& y_{44}& y_{43}\\
-y_{42}& -y_{41}& y_{39}& y_{40}
\end{pmatrix}.
$$

Let now
$\varphi_2(x_{\alpha_4}(1))=x_4=(z_{i,j})$. Since $x_4$ commutes with all
$h_{\alpha_i}(-1)$, $i=1,2,4$, and $w_{1}$, $w_{2}$, and also for $w_{{13}}$ we have $w_{13} x_4 w_{13}^{-1}= x_4^{-1}=h_{\alpha_3}(-1) x_4 h_{\alpha_3}(-1)$,  then by direct calculation we obtain:

1. The matrix  $x_4$ can be decomposed into following eight diagonal blocks:
\begin{align*}
B_1'&=\{ v_{4}, v_{-4},  V_{1}, V_{2}, V_{3}, V_{4}\};\\
 B_2'&=\{ v_{1}, v_{-1}, v_{{14}}, v_{-{14}}, v_{{17}}, v_{-{17}}, v_{{20}}, v_{-{20}},   v_{{22}}, v_{-{22}}\};\\
B_3'&=\{ v_{2}, v_{-2}, v_{{10}}, v_{-{10}}, v_{{13}}, v_{-{13}}, v_{{16}}, v_{-{16}},   v_{{24}}, v_{-{24}}\};\\
B_4'&=\{ v_{5}, v_{-5}, v_{{12}}, v_{-{12}}, v_{{15}}, v_{-{15}}, v_{{18}}, v_{-{18}},   v_{{23}}, v_{-{23}}\};\\
 B_5'&=\{v_{6}, v_{-6}, v_{{9}}, v_{-{9}}\};\\
  B_6'&=\{ v_{3}, v_{-3}, v_{{7}}, v_{-{7}}\};\\
B_7'&=\{ v_{8}, v_{-8}, v_{{11}}, v_{-{11}}\};\\
B_8'&=\{ v_{19}, v_{-19}, v_{{21}}, v_{-{21}}\}.
\end{align*}

2. On the first block the matrix $x_4$ has the form
$$
\begin{pmatrix}
z_1& z_2& 0& 0& z_3& -2z_3\\
z_4& z_5& 0& 0& z_6& -2z_6\\
0& 0& z_7& 0& 0& 0\\
0& 0& 0& z_7& 0& 0\\
0& 0& 0& 0& z_7& 0\\
z_8& z_9& 0& 0& z_{10}& z_7-2z_{10}
\end{pmatrix}.
$$

3. On the second, third and fourth blocks it is
$$
\begin{pmatrix}
z_{11}& z_{12}& -z_{13}& -z_{14}& z_{15}& z_{15}& z_{14}& z_{13}& -z_{16}& z_{16}\\
z_{12}& z_{11}& z_{13}& z_{14}& -z_{15}& -z_{15}& -z_{14}& -z_{13}& z_{16}& -z_{16}\\
-z_{17}& z_{17}& z_{18}& z_{19}& z_{20}& z_{21}& z_{22}& z_{23}& z_{17}& -z_{17}\\
z_{24}& -z_{24}& z_{25}& z_{26}& z_{27}& z_{28}& z_{29}& z_{30}& -z_{24}& z_{24}\\
-z_{31}& z_{31}& z_{32}& z_{33}& z_{34}& z_{35}& z_{36}& z_{37}& z_{31}& -z_{31}\\
-z_{31}& z_{31}& -z_{37}& -z_{36}& z_{35}& z_{34}& -z_{33}& -z_{32}& z_{31}& -z_{31}\\
-z_{24}& z_{24}& z_{30}& z_{29}& -z_{28}& -z_{27}& z_{26}& z_{25}& z_{24}& -z_{24}\\
z_{17}& -z_{17}& z_{23}& z_{22}& -z_{21}& -z_{20}& z_{19}& z_{18}& -z_{17}& z_{17}\\
-z_{16}& z_{16}& z_{13}& z_{14}& -z_{15}& -z_{15}& -z_{14}& -z_{13}& z_{11}& z_{12}\\
z_{16}& -z_{16}& -z_{13}& -z_{14}& z_{15}& z_{15}& z_{14}& z_{13}& z_{12}& z_{11}
\end{pmatrix}.
$$

4. On all other blocks  $x_4$ has the form
$$
\begin{pmatrix}
z_{38}& z_{39}& z_{40}& z_{41}\\
z_{42}& z_{43}& z_{44}& z_{45}\\
-z_{45}& -z_{44}& z_{43}& z_{42}\\
-z_{41}& -z_{40}& z_{39}& z_{38}
\end{pmatrix}.
$$

Therefore, we have $85$ variables $y_1,\dots, y_{40}, z_1,\dots, z_{45}$, where $y_1$, $y_6$, $y_{11}$, $y_{20}$, $y_{21}$, $y_{30}$, $y_{32}$, $y_{36}$, $y_{39}$, $y_{44}$, $z_1$, $z_5$, $z_7$, $z_{11}$, $z_{18}$, $z_{26}$, $z_{28}$, $z_{30}$, $z_{38}$, $z_{34}$, $z_{43}$, $z_{45}$ are $1$ modulo radical,  $y_2$, $y_4$, $y_{17}$, $y_{46}$, $z_2$, $z_3$, $z_9$ are $-1$ modulo radical, $z_{32}$ is $-2$ modulo radical, all other variables are from radical.

We apply step by step four basis changes, commuting with each other and with all matrices $w_{i}$. These changes are represented by matrices $C_1$, $C_2$, $C_3$, $C_4$. Matrices $C_1$ and $C_2$ are block-diagonal, where first $24$ blocks have the size $2\times 2$, the last block is $4\times 4$. On all  $2\times 2$ blocks, corresponding to short roots, the matrix  $C_1$ is unit, on all  $2\times 2$ blocks, corresponding to long roots, it is
$$
\begin{pmatrix}
1& -y_{16}/y_{17}\\
-y_{16}/y_{17}& 1
\end{pmatrix}.
$$
On the last block it is unit.

Similarly,  $C_2$ is unit on the blocks corresponding to long roots, and on the last block. On the blocks corresponding to the short roots, it is
$$
\begin{pmatrix}
1& -z_{8}/z_{9}\\
-z_{8}/z_{9}& 1
\end{pmatrix}.
$$

 Matrices $C_3$ and $C_4$ are diagonal, identical on the last $4\times 4$ block, the matrix $C_3$ is identical on all places, corresponding to short root, and scalar with multiplier~$a$ on all places corresponding to long roots. In the contrary, the matrix  $C_4$, is identical on all places, corresponding to long roots, and is scalar with multiplier~$b$ on all places, corresponding to short roots.

Since all these four matrices commutes with all $w_{i}$, $i=1,2,3,4$, then after basis change with any of these matrices all conditions for elements $x_1$ and $x_4$  still hold.

At the beginning we apply basis changes with the matrices $C_1$ and $C_2$. After that new $y_{16}$ in the matrix $x_1$ and $z_8$ in the matrix $x_4$ are equal to zero (for the convenience of notations we do not change names of variables). Then we choose  $a=-1/y_{17}$ (it is new $y_{17}$) and apply the third basis change. After it $y_{17}$ in the matrix $x_1$ becomes to be $-1$. Clear that $y_{16}$ is still zero.

Finally, apply the last basis change with $b=-1/z_9$ (where $z_9$ is the last one, obtained after all previous changes). We have that $y_{16}, y_{17}, z_8$ are not changed, and $z_9$ is now $-1$.

Now we can suppose that $y_{16}=0, y_{17}=-1, z_8=0, z_9=-1$, we have now just $81$~variables.

From the fact that $x_1$ and $x_4$ commute (Cond.~1), it directly follows $y_{37}=y_{38}=0$, $y_{36}=y_{20}$.
From the condition $h_{\alpha_2}(-1)x_1 h_{\alpha_2}(-1) x1=E$ (Cond.~2, its position $(52,52)$) follows that $y_{20}^2=1$, consequently $y_{20}=1$.

From the condition $w_{2}x_1 w_{2}^{-1} x_1=x_1w_{2}(1)x_1 w_{2}(1)^{-1} $ (Cond.~3, the position $(50,10)$) it follows $y_{21}=1$, from its position $(49,10)$ it follows $y_{19}=0$.

The condition
$w_{2}w_{3} w_{2} x_1 w_{2}^{-1}w_{3}(1) w_{2}^{-1} x_1
=x_1 w_{2}w_{3} w_{2} x_1 w_{2}^{-1}w_{3}^{-1} w_{2}^{-1}$
 (Cond.~4, the position $(51,52)$) implies $y_{15}=0$.

Again from Cond.~3 (the position $(18,13)$) we have $y_{46}(y_{45}+y_{42})=0$, whence $y_{45}=-y_{42}$.
From Cond.~2 (the positions $(11,12)$ and $(12,11)$) we obtain $y_{40}(y_{39}+y_{44})=0$ and $y_{43}(y_{39}+y_{44})=0$, therefore $y_{40}=y_{43}=0$. After that in the same condition the position $(12,16)$ gives $y_{44}=y_{39}$. The position $(12,16)$ of Cond.~3 now gives us $y_{46}(y_{39}-1)=0 \Rightarrow y_{39}=1$.

 In the condition $h_{\alpha_3}(-1)x_4 h_{\alpha_3}(-1)x_4=E$ (Cond.~5) the position $(8,7)$ gives $z_4=0$, the position $(7,7)$ gives $z_1=1$;  $(51,51)$ gives $z_7=1$;

In the condition $w_{3}x_4 x_{3}^{-1} x_4=x_4w_{3}x_4 x_{3}^{-1}$ (Cond.~6) the position $(51,5)$ gives $z_{41}=0$, the position $(51,6)$ gives $z_{40}=0$, the position $(52,7)$ gives $z_{39}=0$, the position $(51,8)$ gives $z_{10}=0$, the position $(52,8)$ gives $z_{38}=1$.

Again from Cond.~5 (positions $(52,52)$, $(52,8)$, $(7,8)$) we obtain $z_6=0$, $z_5=1$, $z_2=z_3$.

Returning to Cond.~6, from  $(13,51)$ we have $z_{43}=1$, from $(5,51)$ we have $z_{44}=0$, from $(5,14)$ we have $z_{42}=0$, from $(12,17)$ we have $z_{35}=0$, from $(12,18)$ we have $z_{34}=1$, from $(12,19)$ --- $z_{37}=-z_{31}$, from $(12,20)$ --- $z_{36}=z_{31}$, from $(9,15)$ --- $z_{20}=-z_{15}$, and from  $(10,15)$ --- $z_{27}=z_{15}$.

The position $(11,22)$ of Cond.~1 now gives us $y_{42}=0$, and the position $(11,11)$ of Cond.~2 gives $y_{41}=0$.

 Considering $x_{1+2}=\varphi_2(x_{\alpha_1+\alpha_2}(1))=w_{2}x_1w_{2}^{-1}$, $x_2=\varphi(x_{\alpha_2}(1))=w_{1}x_{1+2}w_{1}$ and Cond.~7:  $x_1x_2=x_{1+2}x_2x_1$ (the position $(6,16)$), we obtain $y_{46}=-1$.

 Similarly, considering $x_{3+4}=\varphi_2(x_{\alpha_3+\alpha_4}(1))=w_{3}x_4w_{3}^{-1}$, $x_3=\varphi(x_{\alpha_3}(1))=w_{4}x_{3+4}w_{4}^{-1}$, and Cond.~8:  $x_3x_4=x_{3+4}x_4x_3$  (applying positions $(51,14)$, $(13,52)$, $(12,11)$, $(29,9)$, $(15,35)$, $(15,36)$, $(16,36)$, $(12,19)$, $(12,20)$, $(11,25)$, $(12,26)$, $(10,30)$, $(47,11)$, $(1,2)$, $(1,1)$, $(4,4)$, $(3,4)$, $(3,18)$, $(3,17)$, $(4,17)$, $(4,3)$, $(3,3)$, $(18,3)$), we obtain $z_{45}=1$, $z_3=-1$, $z_{31}=0$, $z_{32}=-2$, $z_{14}=0$, $z_{13}=0$, $z_{30}=1$, $z_{25}=0$, $z_{26}=1$, $z_{15}=0$, $z_{28}=1$, $z_{24}=0$, $z_{16}=0$, $z_{12}=0$, $z_{11}=1$, $z_{17}=0$, $z_{19}=0$, $z_{21}=0$, $z_{22}=0$, $z_{29}=0$, $z_{23}=0$, $z_{18}=1$, $z_{33}=0$, respectively.

Therefore we obtain that $x_4=x_{\alpha_4}(1)$.

Directly from the first condition we now have $y_3=y_7=y_{27}=y_{25}=y_{34}=y_{26}=y_{33}=y_{28}=y_{35}=y_{22}=y_{24}=y_{29}=y_{31}=y_{12}=y_{13}
=y_9=y_{10}=y_{23}=y_{18}=y_{14}=0$, $y_{30}=y_{32}=y_{11}=1$.

Finally, from Cond.~3 we get
$y_5=0$, $y_6=1$, $y_1=1$, $y_8=0$, $y_4=-1$, from Cond.~2 we get $y_2=-1$.

Now $x_1=x_{\alpha_1}(1)$, it is what we needed.

Since all long (and all short) roots are conjugate under the action of Weil group, it means that
$\varphi_2(x_\alpha(1))=x_\alpha(1)$ for all $\alpha\in \Phi$.

 Consider now the matrix
 $d_t=\varphi_2(h_{\alpha_4}(t))$.

\begin{lemma}\label{l4_1}
The matrix $d_t$ is $h_{\alpha_4}(s)$ for some $s\in R^*$.
\end{lemma}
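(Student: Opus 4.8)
The plan is to transport to $d_t$ all the relations that single out $h_{\alpha_4}(t)$ inside $G(R)$, using the three facts already at our disposal: $\varphi_2(w_i)=w_i$, $\varphi_2(h_{\alpha_i}(-1))=h_{\alpha_i}(-1)$ (the latter as squares of the $w_i$), and $\varphi_2(x_\alpha(1))=x_\alpha(1)$ for every $\alpha\in\Phi$, together with the fact that modulo~$J$ the map $\varphi_2$ is the ring automorphism $\overline\rho$. In $G(R)$ the element $h_{\alpha_4}(t)$ lies in the torus, so it commutes with all four involutions $h_{\alpha_i}(-1)$; and since $\alpha_1,\alpha_2\perp\alpha_4$, it commutes with $w_1=w_{\alpha_1}(1)$ and $w_2=w_{\alpha_2}(1)$. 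Applying $\varphi_2$ we get that $d_t$ satisfies the very same commutation relations. These are a superset of the relations that in Section~3 forced $x_4=\varphi_2(x_{\alpha_4}(1))$ into the block form $B_1',\dots,B_8'$ (indeed $d_t$ commutes in addition with $h_{\alpha_3}(-1)$), so $d_t$ is block-diagonal with respect to a refinement of that same decomposition.

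Next I would pin the off-diagonal entries to zero. For every root $\alpha$ with $\langle\alpha,\alpha_4^\vee\rangle=0$, i.e. $\alpha\perp\alpha_4$, the torus relation $h_{\alpha_4}(t)x_\alpha(u)h_{\alpha_4}(t)^{-1}=x_\alpha(t^{\langle\alpha,\alpha_4^\vee\rangle}u)$ degenerates to commuting; transporting by $\varphi_2$ and using $\varphi_2(x_\alpha(1))=x_\alpha(1)$ gives the exact (not merely modulo~$J$) relation $d_t\,x_\alpha(1)=x_\alpha(1)\,d_t$. The roots orthogonal to the short root $\alpha_4$ form an $18$-element rank-$3$ subsystem $\Phi_\perp$, so $d_t$ commutes with the whole family $\{x_\alpha(1):\alpha\in\Phi_\perp\}$. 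Since, modulo~$J$, one has $d_t\equiv\overline\rho(h_{\alpha_4}(t))=h_{\alpha_4}(\overline{\rho(t)})$, a diagonal matrix, every off-diagonal entry of $d_t$ already lies in~$J$; reading the relations $d_tx_\alpha(1)=x_\alpha(1)d_t$ entrywise then forces these radical entries to vanish, so $d_t=\diag(\lambda_\mu)$ in the weight basis.

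It remains to recognise this diagonal matrix as $h_{\alpha_4}(s)$. A diagonal matrix commutes with $x_\alpha(1)$ exactly when $\lambda_{\mu+\alpha}=\lambda_\mu$ for all weights $\mu$, so the relations coming from $\Phi_\perp$ say that $\mu\mapsto\lambda_\mu$ is constant along every $\alpha$-string with $\alpha\in\Phi_\perp$; equivalently the associated cocharacter $\chi$ is orthogonal to $\Phi_\perp$. As $\Phi_\perp$ spans the hyperplane $\alpha_4^{\perp}$, this leaves $\chi\in\mathbb{R}\,\alpha_4^\vee$, and comparison with $d_t\equiv h_{\alpha_4}(\overline{\rho(t)})\pmod J$ fixes $\chi=\alpha_4^\vee$ on the nose. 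Finally the remaining relations $d_tx_\alpha(1)d_t^{-1}=\varphi_2(x_\alpha(t^{\langle\alpha,\alpha_4^\vee\rangle}))$ with $\langle\alpha,\alpha_4^\vee\rangle\in\{\pm1,\pm2\}$ force the entries $\lambda_\mu$ to be the consistent powers $s^{\langle\mu,\alpha_4^\vee\rangle}$ of a single element $s:=\lambda_\mu$ taken on a line with $\langle\mu,\alpha_4^\vee\rangle=1$; invertibility of $d_t$ makes $s$ a unit, whence $d_t=h_{\alpha_4}(s)$.

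The main obstacle is the middle step: upgrading diagonality modulo~$J$ to genuine diagonality over~$R$. The off-diagonal entries are known a priori only to lie in the radical, and showing that they actually vanish is where the explicit commutation with the orthogonal rank-$3$ subsystem $\{x_\alpha(1):\alpha\in\Phi_\perp\}$ (and with $w_1,w_2$) must be exploited entry by entry; this is the computational heart of the lemma, whereas the initial shape reduction and the final identification of the cocharacter are comparatively formal.
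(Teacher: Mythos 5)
There is a genuine gap at the step you yourself identify as the heart of the matter: upgrading ``diagonal mod $J$'' to ``diagonal''. Commutation with $h_{\alpha_i}(-1)$, with $w_1,w_2$, and with all $x_\alpha(1)$ for $\alpha$ in the orthogonal subsystem $\Phi_\perp$ does \emph{not} force $d_t$ to be diagonal, because the centralizer of the subsystem subgroup in $\GL_{52}$ is strictly larger than the torus. Decompose $V$ into eigenspaces of $\alpha_4^\vee$: the weight-$(+2)$ and weight-$(-2)$ spaces are each $7$-dimensional (they contain $x_{\alpha_4}$ and $x_{-\alpha_4}$ respectively) and are isomorphic as modules over the subgroup generated by $\{x_\alpha(u):\alpha\in\Phi_\perp\}$ (a rank-$3$ subsystem of type $B_3$, for which the $7$-dimensional representation is self-dual). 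Hence the centralizer contains $E+\varepsilon\widetilde\psi$ with $\widetilde\psi$ a module isomorphism swapping these two blocks and $\varepsilon\in J$ arbitrary; such a matrix also commutes with all $h_{\alpha_i}(-1)$ and is congruent to the identity mod $J$, yet is not diagonal. So ``reading the relations entrywise'' cannot kill those off-diagonal entries: the linearized system you would need to be nonsingular is in fact singular. This is visible in the paper's own computation: after all the commutation conditions, $d_t$ still carries a full $2\times2$ block $\left(\begin{smallmatrix}t_4&t_5\\ t_6&t_7\end{smallmatrix}\right)$ on $\{v_4,v_{-4}\}$ (and analogous entries $t_9,t_{11},t_{13}$ elsewhere), and these are eliminated only by the relations $w_4d_tw_4^{-1}d_t=E$ and $w_3d_tw_3^{-1}=d_t\,w_4w_3d_tw_3^{-1}w_4^{-1}$, i.e.\ by conjugating with reflections in roots \emph{not} orthogonal to $\alpha_4$ (these encode $w_{\alpha_4}h_{\alpha_4}(t)w_{\alpha_4}^{-1}=h_{\alpha_4}(t)^{-1}$ and the $\alpha_3$-analogue). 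The same non-orthogonal relations, plus commutation of $\varphi_2(h_{\alpha_{10}}(t))$ with $x_{\alpha_8}(1)$, are what tie together the diagonal entries on the different weight spaces ($t_4=1/t_2^2$, $t_7=t_8=t_2^2$); your cocharacter argument only constrains the restriction of $d_t$ to directions seen by $\Phi_\perp$.

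A secondary, patchable issue: in the last step you invoke $d_tx_\alpha(1)d_t^{-1}=\varphi_2\bigl(x_\alpha(t^{\langle\alpha,\alpha_4^\vee\rangle})\bigr)$, but at this stage $\varphi_2(x_\alpha(s))$ is known only for $s=0,1$; the paper deliberately postpones the determination of $\varphi_2(x_\alpha(t))$ to Section~4, \emph{after} the lemma, precisely because it uses the lemma to compute it. To repair the proof you must add to your list of transported relations the behaviour of $h_{\alpha_4}(t)$ under conjugation by $w_3$ and $w_4$ (or some equivalent input from outside the orthogonal subsystem); without it the statement you are trying to prove is simply not a consequence of the relations you use.
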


\begin{proof}
Since the matrix $d_t$ commutes with $h_{\alpha}(-1)$ for all $\alpha\in \Phi$, then $d_t$ is decomposed to the following diagonal blocks:
\begin{align*}
D_1& =\{v_{1}, v_{-1}, v_{{14}}, v_{-{14}}, v_{{20}}, v_{-{20}},
v_{{22}}, v_{-{22}}\},\\
D_2& =\{v_{2}, v_{-2}, v_{{10}}, v_{-{10}}, v_{{16}}, v_{-{16}},
v_{{24}}, v_{-{24}}\},\\
D_3&= \{v_{3}, v_{-3}\},\quad D_4= \{v_{4}, v_{-4}\},\\
D_5& =\{v_{5}, v_{-5}, v_{{12}}, v_{-{12}}, v_{{18}}, v_{-{18}},
v_{{23}}, v_{-{23}}\},\\
D_6&= \{v_{6}, v_{-6}\},\quad D_7= \{v_{7}, v_{-7}\},\\
D_8&= \{v_{8}, v_{-8}\},\quad D_9= \{v_{9}, v_{-9}\},\\
D_{10}&= \{v_{{11}}, v_{-{11}}\},\quad D_{11}= \{v_{{13}}, v_{-{13}}\},\\
D_{12}&= \{v_{{15}}, v_{-{15}}\},\quad D_{13}= \{v_{{17}}, v_{-{17}}\},\\
D_{14}&= \{v_{19}, v_{-{19}}\},\quad D_{15}= \{v_{{21}}, v_{-{21}}\},\\
D_{16}&=\{ V_{1}, V_{2}, V_{3}, V_{4}\}.
\end{align*}

Using the fact that $d_t$ commutes with $w_{1}$, $w_{2}$, $w_{{13}}$ and $x_1$, we obtain that on the blocks $D_1,D_2,D_5$ the matrix $d_t$ has the form
$$
\begin{pmatrix}
t_1& 0& 0& 0& 0& 0& 0& 0\\
0& t_1& 0& 0& 0& 0& 0& 0\\
0& 0& t_8& 0& t_9& 0& 0& 0\\
0& 0& 0& t_{10}& 0& t_{11}& 0& 0\\
0& 0& t_{11}& 0& t_{10}& 0& 0& 0\\
0& 0& 0& t_9& 0& t_8& 0& 0\\
0& 0& 0& 0& 0& 0& t_1+2t_{13}& 0\\
0& 0& 0& 0& 0& 0& 0& t_1
\end{pmatrix};
$$
 on the blocks $D_3$, $D_6$, $D_8$, $D_{14}$ it is $\diag[t_2,t_3]$; on the blocks  $D_7$, $D_9$, $D_{10}$, $D_{15}$ it is $\diag[t_3,t_2]$, on the block $D_4$ it is
$$
\begin{pmatrix}
t_4& t_5\\
t_6& t_7
\end{pmatrix}
;
$$
on the blocks $D_{11}$, $D_{12}$, $D_{13}$ it has the form $\diag[t_{12},t_{12}]$; and on the last block it is
$$
\begin{pmatrix}
t_1& 0& 0& 0\\
0& t_1& 0& 0\\
0& 0& t_1& 0\\
0& 0& t_{13}& t_1-2t_{13}
\end{pmatrix}.
$$

Using the condition $w_{4}d_t w_{4}^{-1}d_t=E$, we obtain: from the position $(1,1)$ it follows $t_1^2=1$, consequently $t_1=1$, from $(52,52)$ it follows $(1-2t_{13})^2=1$, therefore $t_{13}=0$;  $(5,5)$ implies $t_3=1/t_2$;  $(7,8)$ implies $t_7(t_5+t_6)=0$, whence $t_6=-t_5$; from $(24,36)$ we have $t_8(t_9+t_{11})=0$, therefore $t_{11}=-t_9$; from  $(26,26)$ we have $t_{12}^2=1$, and then $t_{12}=1$.

Now consider the condition $w_{3}d_t w_{3}^{-1}=d_tw_{4}w_{3}d_t w_{3}^{-1}w_{4}^{-1}$. Its position $(13,14)$ gives $t_5=0$, the position $(5,5)$ gives $t_4=1/t_2^2$,  $(6,6)$ gives $t_7=t_2^2$;  $(3,19)$ gives $t_9=0$;  $(19,19)$ gives $t_{10}=1/t_8$.

 Finally, introduce $\varphi_2(h_{\alpha_3}(t))=w_{4}w_{3}d_t w_{3}^{-1}w_{3}^{-1}$, $\varphi_2(h_{\alpha_6}(t))=w_{2}\varphi_2(h_{\alpha_3}(t))w_{2}^{-1}$, $\varphi_2(h_{\alpha_{10}}(t))=\varphi_2(h_{\alpha_6}(t))\varphi_2(h_{\alpha_3}(t))$. Since $\varphi_2(h_{\alpha_{10}}(t))$ commutes with $x_{\alpha_8}(1)$, we obtain (the position $(9,6)$) that $t_8=t_2^2$.

Therefore, $\varphi_2(h_{\alpha_{4}}(t))=h_{\alpha_4}(1/t_2)$, and the lemma is proved.
\end{proof}

Clear, that this lemma holds also for images of all $h_{\alpha}(t)$, $\alpha\in \Phi$.

\section{Images of $x_{\alpha_i}(t)$,
proof of Theorem 1.}

We have shown that $\varphi_2(h_{\alpha}(t))=h_{\alpha}(s)$,
$\alpha\in \Phi$. Denote the mapping $t\mapsto s$ by $\rho: R^*
\to R^*$. Note that for $t\in R^*$
$\varphi_2(x_1(t))=\varphi_2(h_{\alpha_2}(t^{-1}) x_1(1)
h_{\alpha_2}(t))=h_{\alpha_2}(s^{-1}) x_1(1) h_{\alpha_2}(s)=
x_1(s)$. If $t\notin R^*$, then $t\in J$, i.\,e., $t=1+t_1$, where
$t_1\in R^*$. Then
$\varphi_2(x_1(t))=\varphi_2(x_1(1)x_1(t_1))=x_1(1)x_1(\rho(t_1))=
x_1(1+\rho(t_1))$. Therefore if we extend the mapping
$\rho$ to the whole~$R$ (by the formula $\rho(t):=1+\rho(t-1)$, $t\in
R$), we obtain $\varphi_2(x_1(t))=x_1(\rho(t))$ for all $t\in R$.
Clear that $\rho$ is injective, additive, and also  multiplicative on all invertible elements. Since every element of~$R$ is a sum of two invertible elements, we have that  $\rho$ is an isomorphism from the ring~$R$ onto some its subring~$R'$. Note that in this situation $C G(R) C^{-1}=G(R')$ for some matrix $C\in
\GL(V)$. Let us show that $R'=R$.

Denote matrix units by $E_{ij}$.

\begin{lemma}\label{porozhd}
The Chevalley group $G(R)$ generates the matrix ring $M_n(R)$.
\end{lemma}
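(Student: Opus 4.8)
The plan is to descend to the residue field by Nakayama's lemma and then prove generation there using the weight decomposition of the adjoint module together with the Lie algebra structure. Let $A \subseteq M_n(R)$ be the $R$-subalgebra generated by all elements of $G(R)$. It is an $R$-submodule of the finitely generated free module $M_n(R)$, and the claim is $A = M_n(R)$. Since $J$ is the Jacobson radical of $R$, Nakayama's lemma reduces this to $A + J\,M_n(R) = M_n(R)$, i.e. to showing that the image $\bar A$ of $A$ in $M_n(R)/J\,M_n(R) = M_n(k)$, where $k = R/J$ is a field with $\mathrm{char}\,k \neq 2$, is everything. The image $\bar A$ contains the $k$-subalgebra generated by the reduced group $E_{\mathrm{ad}}(F_4,k) = G(k)$ acting on the reduction of $V$. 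So it suffices to prove the statement over a field of characteristic $\neq 2$.

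Over $k$ I would first pull the Chevalley generators into $\bar A$. Each $x_\alpha(t) = E + tX_\alpha + \cdots$ lies in $G(k)$, where $X_\alpha = \ad x_\alpha$ is nilpotent; since $1/2 \in k$, the ``odd part'' $\tfrac12\bigl(x_\alpha(t) - x_\alpha(-t)\bigr)$ together with the differences $x_\alpha(t) - E$ lets me isolate $X_\alpha$ itself as an element of $\bar A$, working entry by entry from the explicit matrices of Section 3 so that no large supply of field scalars is needed. Hence $\bar A$ contains every root matrix $X_\alpha$, and by the relations $[X_\alpha,X_\beta] = c\,X_{\alpha+\beta}$ and $[X_\alpha,X_{-\alpha}] = h_\alpha$ it then contains the full matrix image of the Lie algebra $\mathfrak g = \mathfrak f_4$, including the toral part.

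Next I would produce the matrix units $E_{ij}$. The module decomposes into weight spaces $V_\mu$: the $48$ nonzero weights are the roots, each with a one-dimensional space, while the zero weight space $V_0$ is the $4$-dimensional Cartan block. Using the toral elements $h_\alpha$ already in $\bar A$, I would realize the projections onto the weight spaces as polynomials in a generic toral element, giving the diagonal idempotents $E_{ii}$ for the $48$ root coordinates and the projection onto $V_0$. Squeezing a root matrix, $E_{ii} X_\alpha E_{jj}$ recovers the matrix unit $E_{ij}$ whenever the corresponding entry of $X_\alpha$ is a unit; since the Weyl group acts transitively on roots of each length, every pair of root coordinates linked by a root is reached, and composing such units yields all $E_{ij}$ among the root coordinates. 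Finally the maps $X_\alpha\colon V_0 \to V_\alpha$ and $X_{-\alpha}\colon V_\alpha \to V_0$, sandwiched with idempotents, couple the Cartan block to the root coordinates and, together with the brackets $[X_\alpha,X_{-\alpha}] = h_\alpha$, fill in the internal $E_{ij}$ of the $4\times 4$ zero block, whence $\bar A = M_n(k)$.

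The main obstacle I anticipate is precisely the zero weight space of multiplicity four: its internal matrix units and its coupling to the one-dimensional root spaces cannot be obtained from torus projections alone and must be handled using the explicit structure constants of $F_4$ recorded in Section 3. A second delicate point is absolute irreducibility of the adjoint module at small residue characteristic: for $\mathrm{char}\,k = 3$ one should not invoke a general irreducibility theorem but verify generation directly from the displayed matrices for $X_1$, $X_3$ and the $w_i$, which is exactly the data already assembled there.
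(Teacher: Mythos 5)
Your route is genuinely different from the paper's: the paper never leaves $R$ and never passes to the Lie algebra --- it observes that $(x_{\alpha_1}(1)-1)^2$ is already a single matrix unit times the unit $-2$, produces all multiples $\lambda E_{1,2}$ using diagonal group elements and the fact that $R^*$ generates $R$ additively, transports these by Weyl elements, and assembles the short-root coordinates and the $4\times4$ Cartan block by explicit products. Your reduction modulo the radical is more conceptual, but it has one structural defect. Nakayama's lemma applies to $R$-submodules, so your $A$ is forced to be the $R$-\emph{subalgebra} generated by $G(R)$, and what you prove is $R$-algebra generation. The lemma is used later (Lemma~\ref{Tema}) to conclude that $CG(R)C^{-1}=G(R')$ for a subring $R'\subseteq R$ forces $R'=R$; that argument needs the \emph{subring} generated by $G(R')$ to have all entries in $R'$, and it collapses if $R$-scalars are allowed (the $R$-algebra generated by $G(R')$ equals $M_n(R)$ whether or not $R'=R$). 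So ``generates the matrix ring'' must mean generation under sums, differences and products alone, and your very first step gives that away. The repair is essentially the paper's observation: $(x_\alpha(t)-1)^2=t^2X_\alpha^2$ together with $\lambda=\bigl((\lambda+1)/2\bigr)^2-\bigl((\lambda-1)/2\bigr)^2$ already puts $\lambda X_\alpha^2$ into the subring for every $\lambda\in R$, after which no external scalars are ever needed.

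Two further steps are left incomplete. First, ``projections as polynomials in a generic toral element'' is not literally available: a single matrix over a small residue field (say $\mathbb F_3$) cannot have $49$ distinct eigenvalues. You must use the commutative algebra generated jointly by $h_{\alpha_1},\dots,h_{\alpha_4}$ and check that distinct weights of the adjoint module remain distinct modulo $p$; this does hold for every $p\geq 3$, since two weights with the same Cartan integers mod $p$ would differ by a nonzero element of $pQ$, of length at least $p$, whereas differences of roots have length at most $2\sqrt2<3$. Hence your escape hatch for characteristic $3$ (``verify directly from the displayed matrices'') is unnecessary --- but as written that case is simply not proved. Second, for the $4$-dimensional zero-weight block the products $P_0X_{\alpha}X_{-\alpha}P_0$ are, up to scalars, the symmetric tensors $\alpha^{\vee}\otimes\alpha$ and span only a $10$-dimensional subspace of the $16$-dimensional $\mathrm{End}(V_0)$; to fill the block you must compose $P_0X_\alpha E_{-\alpha,-\alpha}$ with $E_{-\alpha,\beta}$ and $E_{\beta\beta}X_\beta P_0$ for independent roots $\alpha,\beta$, producing $h_\alpha\otimes\beta$ for arbitrary pairs. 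That works, but ``together with the brackets $[X_\alpha,X_{-\alpha}]=h_\alpha$'' as stated does not.
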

\begin{proof}
The matrix $(x_{\alpha_1}(1)-1)^2$ has a unique nonzero element
$-2\cdot E_{12}$. Multiplying it to suitable diagonal matrices, we can obtain
an arbitrary matrix of the form $\lambda\cdot E_{12}$ (since $-2\in R^*$ and $R^*$ generates~$R$). Since the Weil group acts transitively on all roots of the same length, i.\.e., for every long root $\alpha_k$ there exists such  $w\in W$, that
$w(\alpha_1)=\alpha_k$, and then the matrix $\lambda E_{12}\cdot w$ has the form
$\lambda E_{1,2k}$,  and the matrix $w^{-1}\cdot \lambda E_{12}$ has the form
$\lambda E_{2k-1,2}$. Besides, with the help of the Weil group element, moving the first root to the opposite one, we can get the matrix unit
$E_{2,1}$. Taking now different combinations of the obtained elements, we can get an arbitrary element $\lambda E_{ij}$, $1\leqslant i,j\leqslant 48$, indices $i,j$ correspond to the numbers of long roots.

The matrix $(x_{\alpha_4}(1)-1)^2$ is $-2E_{7,8}+2E_{20,32}+2E_{24,36}+2E_{28,40}+2E_{31,19}+2E_{35,23}+2E_{39,27}$.
All matrix units in this sum, except the first one, are already obtained, therefore we can subtract them and get $E_{7,8}$.
Similarly to the longs roots, using the fact that all short roots are also conjugate under the action of the Weil groups, we obtain all $\lambda E_{ij}$, $1\leqslant i,j\leqslant 48$, indices $i,j$ correspond to the short roots.

Now subtract from the matrix $x_{\alpha_1}(1)-1$ suitable matrix units and obtain the matrix  $E_{49, 2}-2 E_{1,49}+ E_{1,50}$.
Multiplying it (from the right side) to $E_{2,i}$, $1\leqslant i\leqslant 48$, where $i$ corresponds to a long root, we obtain all
$E_{49, i}$, $1\leqslant i\leqslant 48$ for $i$ corresponding to the long roots. Multiplying these last elements from the left side to $w_{2}$, we obtain $E_{50, i}$, $1\leqslant i\leqslant 48$ for $i$, corresponding to the long roots; then by multiplying them from the left side to $w_{3}$ we obtain all $E_{51, i}$, $1\leqslant i\leqslant 48$ for $i$, corresponding to the long roots, and, similarly, $E_{52,i}$. Therefore, now we have all $E_{i,j}$, $49\leqslant i\leqslant 52$, $1\leqslant j\leqslant 52$, where $j$ correspond to the long roots.

Then $A=1/8(h_{\alpha_1}(-1)+E)\dots (h_{\alpha_4}(-1)+E)=E_{49,49}+E_{50,50}+E_{51,51}+E_{52,52}$, $B=A(w_1+\dots+w_4)A+2A=E_{49,50}+E_{50,49}+E_{50,51}+2E_{51,50}+E_{51,52}+E_{52,51}$, $C=B^2-A=E_{49,51}+2E_{50,50}+E_{50,52}+2E_{51,49}+2E_{51,51}+2E_{52,50}$, $C^2-B^2=2E_{52,50}$. So we have $E_{52,50}$ and then all $E_{i,j}$, $48<i,j\leqslant 52$, therefore all
 $E_{i,j}$, $1\leqslant i\leqslant 48$, $48< j\leqslant 52$, where $i$ corresponds to the long roots.

Then, taking the matrix $x_{\alpha_4}(t)$ and multiplying it from the left and right side to some suitable matrix units $E_{i,i}$,  we can obtain $E_{i,j}$, where  $i$ corresponds to the long root, $j$ corresponds to the short one. After that it becomes clear, how to get all matrix units $E_{i,j}$, $1\leqslant i,j \leqslant 48$ with the help of the Weil group. Finally, as above, we can obtain all  $E_{i,j}$, $1\leqslant i\leqslant 48$, $48< j\leqslant 52$, where $i$ correspond to the short roots, and so all matrix units.

\end{proof}

\begin{lemma}\label{Tema}
If for some  $C\in \GL_n(R)$ we have $C G(R) C^{-1}=
G(R')$, where $R'$ is a subring of~$R$, then $R'=R$.
\end{lemma}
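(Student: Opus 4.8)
The plan is to play the generation result Lemma~\ref{porozhd} off against itself, once for $R$ and once for $R'$, and then to exploit the fact that conjugation by an element of $\GL_n(R)$ is a ring automorphism of $M_n(R)$. Write $\langle S\rangle$ for the subring of $M_n(R)$ generated by a subset $S$; this subring automatically contains $\mathbb Z\cdot E$ and is closed under $+,-,\cdot$. In this language Lemma~\ref{porozhd} says precisely that $\langle G(R)\rangle=M_n(R)$.

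The first and only delicate step is to check that Lemma~\ref{porozhd} may be reused verbatim with $R'$ in place of $R$; this is where I expect the main (indeed the sole) obstacle to lie. The point is that $R'$ is not merely some subring of $R$: it is the image $\rho(R)$ of the ring isomorphism $\rho\colon R\to R'$ constructed at the start of this section, so it is abstractly isomorphic to $R$ and hence is again a commutative local ring with $1/2$. Because $\rho$ is a ring isomorphism we have $\rho(1/2)=1/2$, so the element $1/2\in R$ genuinely lies in $R'$; consequently $-2\in R'^{*}$, and $R'^{*}$ generates $R'$ additively (every element of a local ring with $1/2$ is a sum of two units). Thus all hypotheses used in the proof of Lemma~\ref{porozhd} hold for $R'$, and the same argument applied to $G(R')\subseteq M_n(R')$ yields $\langle G(R')\rangle=M_n(R')$. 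It is essential here that $R'$ is isomorphic to $R$ rather than an arbitrary subring, since a general subring of a local ring need be neither local nor contain $1/2$.

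With both generation statements in hand the conclusion is immediate. Since $C\in\GL_n(R)$, the map $X\mapsto CXC^{-1}$ is a ring automorphism of $M_n(R)$ carrying $M_n(R)$ onto itself; it sends $G(R)$ onto $G(R')=CG(R)C^{-1}$, hence $\langle G(R)\rangle$ onto $\langle G(R')\rangle$. Therefore
$$
M_n(R')=\langle G(R')\rangle=C\,\langle G(R)\rangle\,C^{-1}=C\,M_n(R)\,C^{-1}=M_n(R).
$$
Comparing $(1,1)$-entries in the equality $M_n(R')=M_n(R)$: for every $\lambda\in R$ the matrix $\lambda E_{11}$ lies in $M_n(R)=M_n(R')$, whose entries lie in $R'$, so $\lambda\in R'$. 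Thus $R\subseteq R'$, and since $R'\subseteq R$ by hypothesis we conclude $R'=R$.
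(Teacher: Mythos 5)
Your proof is correct and follows essentially the same route as the paper: both arguments combine Lemma~\ref{porozhd} with the fact that conjugation by $C\in\GL_n(R)$ maps $M_n(R)$ onto itself, forcing $M_n(R')=C\,M_n(R)\,C^{-1}=M_n(R)$ and hence $R'=R$. You are somewhat more explicit than the paper in justifying why the generation lemma applies to $R'$ (namely, $R'=\rho(R)$ is again a local ring with $1/2$), a point the paper leaves implicit.
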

\begin{proof}
Suppose that $R'$ is a proper subring of~$R$.

Then $C M_n(R) C^{-1} =M_n (R')$, since the group $G(R)$
generates the whole ring $M_n(R)$ (the previous lemma), and the group $G(R')=CG(R) C^{-1}$
generated the ring $M_n(R')$. It is impossible, since $C\in \GL_n(R)$.
\end{proof}

\emph{Proof of Theorem}~1.
We have just proved that  $\rho$ is an automorphism of the ring~$R$. Consequently, the composition of the initial automorphism~$\varphi$ and some basis change with a matrix $C\in \GL_n(R)$, (mapping  $G(R)$ into itself)
is a ring automorphism~$\rho$. It proves Theorem~1. $\square$

\section{Theorem about normalizers and Main Theorem}

To prove the main theorem of this paper (see Theorem~\ref{main} in the end of this section), we need to obtain the following important fact (that has proper interest):

\begin{theorem}\label{norm}
Every automorphism--conjugation of a Chevalley group~$G(R)$ of type $F_4$ over a local ring~$R$ with~$1/2$ is an inner automorphism.
\end{theorem}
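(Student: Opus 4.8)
The plan is to show that any $C\in\GL(V)$ normalizing $G(R)$ actually lies (up to scalar) in $G(R)$ itself, so that conjugation by $C$ coincides with an inner automorphism. The starting point is Theorem~\ref{first}: every automorphism $\varphi$ of $G(R)$ factors as $\varphi=\rho\circ i_C$, a ring automorphism followed by an automorphism--conjugation $i_C$. Thus to prove that every automorphism--conjugation is inner it suffices to analyze $i_C$ directly: we are given $C\,G(R)\,C^{-1}=G(R)$ and we must produce $g\in G(R)$ with $CxC^{-1}=gxg^{-1}$ for all $x\in G(R)$, i.e. $g^{-1}C$ centralizes $G(R)$.

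First I would use the conjugation $i_C$ to build an automorphism of $G(R)$ and feed it through the normalization procedure of Sections~2--4. Since $i_C$ reduces modulo the radical $J$ to an automorphism of $G(R/J)$ over the residue field, and over a field such automorphisms are standard (Steinberg--Humphreys), I can adjust $C$ by an element of $G(R)$ so that the induced residue map is inner; concretely, after multiplying $C$ on the left by a suitable $g\in G(R)$, the new $C$ reduces to a scalar modulo $J$. The key structural input is Lemma~\ref{porozhd}: the group $G(R)$ generates the full matrix ring $M_n(R)$. Because $C$ conjugates $G(R)$ into itself, $C$ also normalizes the ring $M_n(R)=\langle G(R)\rangle$, and conjugation by $C$ is a ring automorphism of $M_n(R)$ fixing the $R$-structure. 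By the Noether--Skolem theorem (or a direct elementary argument, since $M_n(R)$ over a local ring has only inner automorphisms fixing the center), this conjugation is itself realized by an element of $\GL_n(R)$; combined with the reduction $C\equiv\text{scalar}\pmod J$, one gets that $C$ is determined up to a central scalar by its action, which is inner.

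The concrete steps, in order, are: (1) reduce $i_C$ modulo $J$ and apply the field case to replace $C$ by $gC$ with $g\in G(R)$ so that the residual action is trivial, i.e. $gC\in\GL_n(R,J)$ up to scalar; (2) invoke Lemma~\ref{porozhd} so that $C$ normalizes $M_n(R)$ and the induced automorphism of $M_n(R)$ is $R$-linear on the center; (3) show this forces $g^{-1}C$ to centralize all the generators $x_\alpha(t)$ and $h_\alpha(t)$, using the explicit action worked out in Sections~3--4, so that $g^{-1}C$ acts as a scalar on each weight space; (4) conclude that $g^{-1}C$ is a scalar matrix, hence $i_C=i_g$ is inner. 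At each stage the block decompositions of the $x_{\alpha_i}(t)$ and the matrices $w_i$ from Section~3 pin down the centralizer of $G(R)$ in $\GL(V)$ to be exactly the scalars, since the representation $V$ is irreducible modulo $J$.

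The main obstacle I expect is step~(3)--(4): controlling the centralizer of $G(R)$ in $\GL(V)$ over the local ring $R$ rather than over a field. Over a field irreducibility of the adjoint representation of $F_4$ (in characteristic $\neq 2,3$) gives that the centralizer is just the scalars by Schur's lemma, but over $R$ with nilpotents the centralizer could a priori be larger. The delicate point is to rule out ``radical-valued'' centralizing matrices $1+N$ with $N\in M_n(J)$ that are not scalar; I would handle this by a successive-approximation (filtration by powers of $J$) argument, showing at each layer $J^k/J^{k+1}$ that commuting with all $x_\alpha(t)$ forces the off-scalar part to vanish, exactly as the explicit commutation relations in Section~3 force the variables $y_i,z_j$ to take their standard values. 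Once the centralizer is shown to be scalar, the identification of $i_C$ with an inner automorphism is immediate.
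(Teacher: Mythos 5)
Your step (2) is vacuous, and your step (3) has a genuine gap that your outline does not address. Conjugation by \emph{any} invertible matrix $C\in\GL_n(R)$ is automatically an inner automorphism of the full matrix ring $M_n(R)$ (realized by $C$ itself), so invoking Lemma~\ref{porozhd} plus Noether--Skolem tells you nothing you did not already know; in the paper that lemma is used only in the proof of Theorem~\ref{first} (via Lemma~\ref{Tema}, to show $R'=R$), not here. The actual difficulty is not whether $i_C$ is inner on $M_n(R)$ but whether $C$ differs from an element of $G(R)$ by something central.

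The gap is in step (3): after you adjust $C$ by $g\in G(R)$ so that the residual action over $k=R/J$ is trivial, the matrix $g^{-1}C$ lies in $\GL_n(R,J)$ up to a scalar and normalizes $G(R)$, but there is no reason whatsoever for it to \emph{centralize} the generators $x_\alpha(t)$. Take $C=x_{\alpha_1}(t)$ with $t\in J$: it normalizes $G(R)$, is congruent to $E$ modulo $J$ (so your step (1) is already satisfied with $g=E$), yet it does not centralize $G(R)$ and is certainly not scalar --- your steps (3)--(4) would conclude it is. The point is that the field-level argument pins down the correcting element $g$ only modulo $G(R,J)$, and one must still produce a further correction inside $\lambda G(R,J)$ absorbing the radical-level discrepancy. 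This is exactly where the paper does its real work: Lemma~\ref{prod2} parametrizes elements of $\lambda G(R,J)$ by $53$ specific matrix entries, which lets one manufacture a candidate $D\in\lambda G(R)$ directly from the entries of $C$; one then proves, by analyzing the linearized system coming from $D^{-1}Cx_\alpha(1)C^{-1}D=x_\alpha(1)g_\alpha'$ for the generating roots, that $D^{-1}C=E$ up to the (scalar) centralizer. Your successive-approximation idea over powers of $J$ addresses only the easy final step --- that the centralizer of $G(R)$ in $\GL(V)$ is scalar --- not the essential normalizer-modulo-centralizer computation, so the core of the proof is missing.
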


\begin{proof}
Suppose that we have some matrix $C=(c_{i,j})\in \GL_{52}(R)$ such that
$$
C\cdot G \cdot C^{-1}=G.
$$

If $J$ is the radical of $R$, then  $M_n(J)$ is the radical in the matrix ring $M_n(R)$, therefore
$$
C\cdot M_n(J)\cdot C^{-1}=M_n(J),
$$
consequently,
$$
C\cdot (E+M_n(J))\cdot C^{-1}=E+M_n(J),
$$
i.\,e.,
$$
C\cdot G(R,J)\cdot C^{-1}=G(R,J),
$$
since $G(R,J)=G\cap (E+M_n(J)).$

Thus, the image $\overline C$ of the matrix~$C$ under factorization~$R$ by~$J$ gives us an automorphism--conjugation of the Chevalley group $G(k)$, where $k=R/J$ is a residue field of~$R$.

But over a field every automorphism--conjugation of a Chevalley group of type $F_4$ is inner (see~\cite{Steinberg}), therefore a conjugation by $\overline C$ (denote it by~$i_{\overline C}$) is
$$
i_{\overline C}= i_g,
$$
where $g\in G(k)$.

Since over a field our Chevalley group (of type $F_4$) coincides with its elementary subgroup, every its element is a product of some set of unipotents $x_\alpha(t)$) and the matrix  $g$ can be decomposed into a product $x_{\alpha_{i_1}}(Y_1)\dots x_{i_{N}}(Y_{N})$, где
$Y_1,\dots, Y_{N}\in k$.

Since every element $Y_1,\dots, Y_N$
is a residue class in~$R$, we can choose
(arbitrarily) elements $y_1\in Y_1$, \dots, $y_N\in Y_N$, and the element
$$
g'=x_{\alpha_{i_1}}(y_1)\dots x_{i_N}(y_N)
$$
satisfies  $g'\in G(R)$ and $\overline
{g'}=g$.

Consider the matrix $C'={g'}^{-1}\circ d^{-1}\circ C$. This matrix also normalizes the group $G(R)$, and
also $\overline {C'}=E$. Therefore, from the description of the normalizer of~$G(R)$ we come to the description of all matrices from this normalizer equivalent to the unit matrix modulo~$J$.

Therefore we can suppose that our initial matrix $C$ is equivalent to the unit modulo~$J$.

Our aim is to show that $C\in \lambda G(R)$.

Firstly we prove one technical lemma that we will need later.

\begin{lemma}\label{prod2}
Let $X=\lambda t_{\alpha_1}(s_1)\dots t_{\alpha_4}(s_4)x_{\alpha_1}(t_1)\dots x_{\alpha_{24}}(t_{24})x_{-\alpha_1}(u_1)\dots x_{-\alpha_{24}}(u_{24})\in \lambda G(R,J)$.
Then the matrix $X$ has such $53$ coefficients \emph{(}precisely described in the proof of lemma\emph{)}, that uniquely define all $s_1,\dots,s_4,t_1,\dots,t_{24},u_1,\dots, u_{24}, \lambda$.
\end{lemma}

\begin{proof}
Consider the sequence of roots:
\begin{align*}
&\gamma_1=\alpha_1,\\
&\gamma_2=\alpha_5=\alpha_1+\alpha_2,\\
&\gamma_3=\alpha_8=\alpha_1+\alpha_2+\alpha_3,\\
&\gamma_4=\alpha_{12}=\alpha_1+\alpha_2+2\alpha_3,\\
&\gamma_5=\alpha_{15}=\alpha_1+\alpha_2+2\alpha_3+\alpha_4,\\
&\gamma_6=\alpha_{17}=\alpha_1+2\alpha_2+2\alpha_3+\alpha_4,\\
&\gamma_7=\alpha_{19}=\alpha_1+2\alpha_2+3\alpha_3+\alpha_4,\\
&\gamma_8=\alpha_{21}=\alpha_1+2\alpha_2+3\alpha_3+2\alpha_4,\\
&\gamma_9=\alpha_{22}=\alpha_1+2\alpha_2+4\alpha_3+2\alpha_4,\\
&\gamma_{10}=\alpha_{23}=\alpha_1+3\alpha_2+4\alpha_3+2\alpha_4,\\
&\gamma_{11}=\alpha_{24}=2\alpha_1+3\alpha_2+4\alpha_3+2\alpha_4.
\end{align*}
All roots of~$F_4$, except $\alpha_{14}$ and $\alpha_{18}$, are differences between two distinct roots of this sequence (or its member).

Besides,  $\gamma_1$ is a simple root, $\gamma_{11}$ is a maximal root of the system, every root of the sequence is obtained from the previous one by adding some simple root.

Consider in the matrix $X$ some place $(\mu,\nu)$, $\mu,\nu\in \Phi$.

To find an element on this position we need to define all sequences of roots $\beta_1, \dots, \beta_p$, satisfying the following properties:

1. $\mu + \beta_1\in \Phi$, $\mu +\beta_1+\beta_2\in \Phi$, \dots, $\mu+\beta_1+\dots+\beta_i\in \Phi$, \dots,
$\mu+\beta_1+\dots+\beta_p=\nu$.

2. In the initial numerated sequence  $\alpha_1,\dots,\alpha_{24}, -\alpha_1,\dots, -\alpha_{24}$ the roots $\beta_1,\dots,\beta_k$ are replaced strictly from right to left.

Finally in the matrix $X$ on the position $(\mu,\nu)$ there is the sum of all products $\pm \beta_1\cdot\beta_2\dots \beta_p$ by all sequences with these two properties, multiplying to $d_\mu=\lambda s_1^{\langle \alpha_1,\mu\rangle}\dots s_4^{\langle \alpha_4,\mu\rangle}$. If $\mu=\nu$, we must add $1$ to the sum.

We will find the obtained elements $s_1,\dots,s_4,t_1,\dots, t_m,u_1,\dots,u_m$  step by step.

 Firstly we consider in the matrix $X$ the position $(-\gamma_{11},-\gamma_{11})$. We can not add to the root $-\gamma_{11}$ any negative root to obtain a root in the result. If in a sequence  $\beta_1,\dots, \beta_p$ the first root is positive, then all other roots must be positive. Thus, this position contains an element $1\cdot d_\nu$. So we know $d_{-\gamma_{11}}$. By the previous arguments if we consider the position $(-\gamma_{11},-\gamma_{10})$, the suitable sequence is only $\alpha_{1}=\gamma_{11}-\gamma_{10}$. Since there is $d_{-\gamma_{11}} t_1$ on this position and we already know $d_{-\gamma_{11}}$,  we can find $t_{1}$ on the position $(-\alpha_{24},-\alpha_{23})$.
Considering the positions $(-\gamma_{10},-\gamma_{10})$  and $(-\gamma_{10},-\gamma_{11})$, we see that by similar reasons there are $d_{-\gamma_{10}}(1\pm u_1t_1)$ and  $\pm d_{-\gamma_{10}} u_{1}$ there. So we find $d_{-\gamma_{10}}$ and $u_1$.

Now we come to the second step.
As we have written above, in the matrix $X$ on the position $(-\gamma_{10},-\gamma_9)$ there is $d_{-\gamma_{10}}(\pm t_{2}\pm u_{1}t_{5})$; on the position $(-\gamma_{9},-\gamma_{10})$ there is $d_{-\gamma_{9}}(\pm u_{2}\pm u_{5}t_{1})$; on the position $(-\gamma_{11},-\gamma_9)$ there is $\pm d_{-\gamma_{11}} t_{5}$ (the second summand is absent, since $\alpha_1$ is staying earlier than $\alpha_2$);  on the position $(-\gamma_9,-\gamma_{11})$ there is $d_{-\gamma_{9}}(\pm u_{5}\pm u_{2}u_{1})$; finally, on the position $(-\gamma_9,-\gamma_{9})$ there is $d_{-\gamma_{9}}(1+\pm u_{5}t_5\pm u_{2}t_{2})$.  From the position $(-\gamma_{11},-\gamma_9)$ we find $t_5$, then from the position $(-\gamma_{10},-\gamma_9)$ we find $t_2$, and from other three positions together we can know $u_2,u_5,d_{-\gamma_{9}}$. Therefore, now  we know $t_1, t_2, t_{5},u_1, u_2,u_{5}, d_{-\gamma_{9}}, d_{-\gamma_{10}}, d_{-\gamma_{11}}$.

On the third step we consider the positions $(-\gamma_{9},-\gamma_8)$ with $d_{-\gamma_{9}}(\pm t_3\pm u_2t_{6}\pm u_5t_{8})$, $(-\gamma_{8},-\gamma_9)$ with $d_{-\gamma_{8}}(\pm u_3\pm t_2u_{6}\pm t_5u_{8})$, $(-\gamma_{10},-\gamma_8)$ with $d_{-\gamma_{10}}(\pm t_{6}\pm u_1t_{8})$, $(-\gamma_{8},-\gamma_{10})$ with $d_{-\gamma_{8}}(\pm u_{6}\pm u_2u_3\pm t_1u_{8})$, $(-\gamma_{11},-\gamma_8)$ with $d_{-\gamma_{11}}(\pm t_{8}\pm  t_{5}t_{3})$, $(-\gamma_{8},-\gamma_{11})$ with $d_{-\gamma_{8}}(\pm u_{8}\pm u_3u_2u_1 \pm u_{6}u_1)$, and $(-\gamma_{8},-\gamma_8)$ with $d_{-\gamma_{8}}(1\pm u_3t_3\pm u_{5}t_{5}\pm u_{8}t_{8}\pm u_{8}t_{5}t_3)$. From these seven equations with seven unknown variables (all of them from radical) we can find all variables $t_3,u_3,t_6,u_6,t_8,u_8$ and $d_{-\gamma_{8}}$.

Similarly on the next step we consider the positions $(-\gamma_{8},-\gamma_7)$, $(-\gamma_{7},-\gamma_8)$, $(-\gamma_{9},-\gamma_7)$, $(-\gamma_{7},-\gamma_9)$, $(-\gamma_{10},-\gamma_7)$, $(-\gamma_{7},-\gamma_{10})$, $(-\gamma_{11},-\gamma_7)$, $(-\gamma_{7},-\gamma_{11})$, and $(-\gamma_{7},-\gamma_7)$, and find $t_4$, $u_4$, $t_7$, $u_7$, $t_9$, $u_9$, $t_{11}$, $u_{11}$, $d_{-\gamma_{7}}$.

Now we know $d_{-\gamma_{7}}, d_{-\gamma_{8}}, d_{-\gamma_{9}},d_{-\gamma_{10}}$ and $d_{-\gamma_{11}}$, i.\,e., $\lambda s_4/s_3$, $\lambda/s_4$, $\lambda s_2/s_3$, $\lambda s_1/s_2$ and $\lambda/s_1$. So we know all $s_i$, $i=1,\dots,4$, $\lambda$, and, consequently, all $d_{-\gamma_{i}}$.

Suppose now that we know all elements $t_i, u_j$ for all indices corresponding to the roots of the form $\gamma_p-\gamma_q$, $11\geqslant p,q> s$.
 Consider the positions $(-\gamma_{11},-\gamma_s)$, $(-\gamma_s,-\gamma_{11})$, $(-\gamma_{10},-\gamma_s)$, $(-\gamma_s,-\gamma_{10})$, \dots, $(-\gamma_{s+1},-\gamma_s)$,
$(-\gamma_s,-\gamma_{s+1})$ in the matrix~$X$. Clear that on every place $(-\gamma_i,-\gamma_s)$, $1\geqslant i>s$, there is sum of $t_p$, where $p$ is a number of the root $\gamma_i-\gamma_s$ (if it is a root), and products of different elements $t_a,u_b$, where only one member of the product is not known yet, all other elements are known and lie in radical; and all this sum is multiplying to the known element $d_{-\gamma_{i}}$. The same situation is on the positions $(-\gamma_s,-\gamma_i)$, $1\geqslant i>s$, but there is not $t_p$, but $u_p$ without multipliers here. Therefore, we have exactly the same number of (not uniform) linear equations as the number of roots of the form $\pm (\gamma_i-\gamma_s)$, with the same number of variables, in every equation exactly on variable has invertible coefficient, other coefficients are from radical, for distinct equations such variables are different. Clear that such a system has the solution, and it is unique. Consequently, we have made the induction step and now we know  elements $t_i, u_j$ for all indices, corresponding to the roots $\gamma_p-\gamma_q$, $11\geqslant p,q\geqslant s$.

On the last step we know elements $t_i, u_j$ for all indices, corresponding to the roots $\gamma_p-\gamma_q$, $11\geqslant p,q
\leqslant 1$. Consider now in $X$ the positions $(-\gamma_{11},h_{\gamma_{11}})$, $(h_{\gamma_{11}},-\gamma_{11})$, $(-\gamma_{10},h_{\gamma_{10}})$, $(h_{\gamma_{10}},-\gamma_{10})$, \dots, $(-\gamma_1,h_{\gamma_1})$, $(h_{\gamma_1},-\gamma_1)$. Similarly to the previous arguments we can find all  $t$ and $u$, corresponding to the roots $\pm \gamma_1,\dots, \pm \gamma_k$.

We have not found yet the obtained coefficients for two pairs of roots: $\pm \alpha_{14}$ and $\pm \alpha_{18}$.
Note that $\alpha_{14}+\alpha_{18}=\alpha_{24}$.

Consider in $X$ the positions $(-\alpha_{24},-\alpha_{14})$, $(-\alpha_{14}, -\alpha_{24})$, $(-\alpha_{24},-\alpha_{18})$, $(-\alpha_{18}, -\alpha_{24})$. On these positions there are sums of $t_{18}$ (respectively, $u_{18}$, $t_{14}$, $u_{14}$),  and products of elements $t_i,u_j$, corresponding to roots of smaller heights.  Since for all heights smaller than the height of $\alpha_{14}$, we know  $t,u$, then we can directly find the obtained coefficients.

Therefore, lemma is completely proved.
\end{proof}

Now return to our main proof.
Recall that we work with a matrix~$C$, equivalent to the unit matrix modulo radical, and normalizing Chevalley group $G(R)$.

For every root $\alpha\in\Phi$ we have
\begin{equation}\label{osn_eq}
C x_{\alpha}(1)C^{-1}=x_{\alpha}(1)\cdot g_\alpha,\quad g_\alpha\in
G(R,J).
\end{equation}
Every $g_\alpha \in G(R,J)$ can be decomposed into a product
\begin{equation}\label{razl_rad}
 t_{\alpha_1}(1+a_1)\dots
t_{\alpha_4}(1+a_4)x_{\alpha_1}(b_1)\dots
x_{\alpha_{24}}(b_{24})x_{\alpha_{-1}}(c_1)\dots x_{\alpha_{-24}}(c_{24}),
\end{equation}
where $a_1,\dots,a_4,b_1,\dots,b_{24},c_1,\dots, c_{24}\in J$ (see,
for example,~\cite{Abe1}).

Let $C=E+X=E+(x_{i,j})$. Then for every root~$\alpha\in \Phi$
we can write a matrix equation~\eqref{osn_eq} with variables $x_{i,j},
a_1,\dots,a_4,b_1,\dots,b_{24},c_1,\dots, c_{24}$, every of them is from radical.

Let us change these equations.
We consider the matrix~$C$ and ``imagine'', that it is some matrix from Lemma~\ref{prod2} (i.\,e., it is from $\lambda G(R)$). Then by some its concrete $53$~positions we can  ``define'' all coefficients $\lambda, s_1,\dots, s_4, t_1,\dots,t_{24},u_1,\dots, u_{24}$ in the decomposition of this matrix from Lemma~\ref{prod2}. In the result we obtain a matrix $D\in \lambda G(R)$, every matrix coefficient in it is some (known) function of coefficients of~$C$.
Change now the equations~\eqref{osn_eq} to the equations
\begin{equation}\label{fol_eq}
D^{-1}C x_{\alpha}(1)C^{-1}D=x_{\alpha}(1)\cdot {g_\alpha}',\quad {g_\alpha}'\in
G(R,J).
\end{equation}
We again have matrix equations, but with variables $y_{i,j},
a_1',\dots,a_4',b_1',\dots,b_{24}',c_1',\dots, c_{24}'$, every of them still is from radical, and also every
 $y_{p,q}$ is some known function of (all) $x_{i,j}$. The matrix $D^{-1}C$ will be denoted by~$C'$.

We want to show that a solution exists only for all variables with  primes equal to zero. Some $x_{i,j}$ also will equal to zero, and other are reduced in the equations. Since the equations are very complicated we will consider the linearized system. It is sufficient to show that all variables from the linearized system (let it be the system of  $q$~variables) are members of some system from $q$ linear equations with invertible in~$R$ determinant.

In other words, from the matrix equalities we will show that all variables from them are equal to zeros.

Clear that  linearizing  the product $Y^{-1}(E+X)$ we obtain some matrix $E+(z_{i,j})$, with all positions described in Lemma~\ref{prod2} equal to zero.

To find the final form of the linearized system, we write it as follows:
\begin{multline*}
(E+Z)x_\alpha(1) =x_\alpha(1)(E+a_1T_1+a_1^2\dots)\dots
(E+a_4T_l+a_4^2\dots)\cdot\\
\cdot(E+b_1X_{\alpha_1}+b_1^2X_{\alpha_1}^2/2)\dots
(E+c_{24}X_{-\alpha_{24}}+c_{24}^2X_{-\alpha_{24}}^2/2)(E+Z),
\end{multline*}
where $X_\alpha$ is a corresponding Lie algebra element in the adjoint representation, the matrix $T_i$ is diagonal, has on its diagonal $\langle \alpha_i,\alpha_k$ on the place corresponding to $v_k$; on the places corresponding to the vectors $V_j$, this matrix has zeros.

Then the linearized system has the form
$$
Zx_{\alpha}(1)-x_{\alpha}(1)(Z+a_1T_1+\dots+a_4T_4+b_1X_{\alpha_1}+\dots+c_{24}X_{\alpha_{24}})=0.
$$
This equation can be written for every $\alpha\in
\Phi$ (naturally, with another $a_j, b_j, c_j$), and can be written only for generating roots: for $\alpha_1,\dots,
\alpha_4, -\alpha_1, \dots, -\alpha_4$:

$$
\begin{cases}
Zx_{\alpha_1}(1)-x_{\alpha_1}(1)(Z+a_{1,1}T_1+\dots +a_{4,1}T_4+\\
\ \ \ \ \ b_{1,1}X_{\alpha_1}+
b_{2,1}X_{\alpha_2}+\dots+b_{24,1}X_{\alpha_{24}}
+c_{1,1}X_{-\alpha_1}+\dots+c_{24,1}X_{-\alpha_{24}})=0;\\
\dots\\
Zx_{\alpha_4}(1)-x_{\alpha_4}(1)(Z+a_{1,4}T_1+\dots +a_{4,4}T_4+\\
\ \ \ \ \
b_{1,4}X_{\alpha_1}+\dots+X_{\alpha_{24}}b_{24,1}X_{\alpha_{24}}
+c_{1,4}X_{-\alpha_1}+\dots+c_{24,4}X_{-\alpha_{24}})=0;\\
\dots\\
Zx_{-\alpha_1}(1)-x_{-\alpha_1}(1)(Z+a_{1,5}T_1+\dots+a_{4,5}T_4+\\
\ \ \ \ \
+b_{1,5}X_{\alpha_1}+\dots+b_{24,5}X_{\alpha_{24}}
+c_{1,5}X_{-\alpha_1}+\dots+c_{24,5}X_{-\alpha_5})=0;\\
\dots\\
Zx_{-\alpha_4}(1)-x_{-\alpha_4}(1)(Z+a_{1,8}T_1+\dots+a_{4,8}T_4+\\
\ \ \ \ \
+b_{1,8}X_{\alpha_1}+\dots+b_{24,8}X_{\alpha_{24}}
+c_{1,8}X_{-\alpha_1}+\dots+c_{24,8}X_{-\alpha_{24}})=0.
\end{cases}
$$

The matrix $T_1$ is
\begin{multline*}
\diag[2,-2,-1,1,0,0,0,0,1,-1,-1,1,0,0,1,-1,-1,1,-1,1,1,-1,\\
1,-1,-1,1,0,0,1,-1,-1,1,0,0,1,-1, 0,0,0,0,0,0,0,0,-1,1,1,-1,0,0,0,0];
\end{multline*}
$T_2$ is $w_{1}w_{2}T_1w_{2}^{-1}w_{1}^{-1}$;
 $T_3$ is
\begin{multline*}
\diag[0,0,-2,2,2,-2,-1,1,-2,2, 0,0,1,-1,0,0,-1,1,2,-2, -1,1,\\
2,-2,1,-1,0,0,1,-1, 0,0,-1,1,0,0, 1,-1,-2,2,  0,0,2,-2,0,0,0,0,0,0,0,0];
\end{multline*}
the matrix $T_4$ is $w_{3}w_{4}T_3w_{4}^{-1}w_{3}^{-1}$.

 The matrices $X_{\alpha_1}$, $X_{\alpha_3}$ were written above. Besides them, $X_{-\alpha_1}=w_{1}X_{\alpha_1}w_{1}^{-1}$, $X_{-\alpha_3}=w_{3}X_{\alpha_3}w_{3}^{-1}$. Other matrices  $X_\alpha$ are obtained as follows: $X_{\pm \alpha_5}=w_{2}X_{\pm \alpha_1} w_{2}^{-1}$, $X_{\pm \alpha_2}=w_{1}X_{\pm \alpha_5}w_{1}^{-1}$, $X_{\pm \alpha_{10}}=w_{3}X_{\pm \alpha_2}w_{3}^{-1}$, $X_{\pm \alpha_{12}}=w_{1}X_{\pm \alpha_{10}}w_{1}^{-1}$, $X_{\pm \alpha_{14}}=w_{2}X_{\pm \alpha_{12}}w_{2}^{-1}$, $X_{\pm \alpha_{16}}=w_{4}X_{\pm \alpha_{10}}w_{4}^{-1}$, $X_{\pm \alpha_{18}}=w_{1}X_{\pm \alpha_{16}}w_{1}^{-1}$, $X_{\pm \alpha_{20}}=w_{2}X_{\pm \alpha_{18}}w_{2}^{-1}$, $X_{\pm \alpha_{22}}=w_{3}X_{\pm \alpha_{20}}w_{3}^{-1}$, $X_{\pm \alpha_{23}}=w_{2}X_{\pm \alpha_{22}}w_{2}^{-1}$, $X_{\pm \alpha_{24}}=w_{1}X_{\pm \alpha_{23}}w_{1}^{-1}$, $X_{\pm \alpha_7}=w_{4}X_{\pm \alpha_3}w_{4}^{-1}$, $X_{\pm \alpha_4}=w_{3}X_{\pm \alpha_7}w_{3}^{-1}$, $X_{\pm \alpha_6}=w_{2}X_{\pm \alpha_3}w_{2}^{-1}$, $X_{\pm \alpha_8}=w_{1}X_{\pm \alpha_6}w_{1}^{-1}$, $X_{\pm \alpha_9}=w_{4}X_{\pm \alpha_6}w_{4}^{-1}$, $X_{\pm \alpha_{11}}=w_{1}X_{\pm \alpha_9}w_{1}^{-1}$, $X_{\pm \alpha_{13}}=w_{3}X_{\pm \alpha_9}w_{3}^{-1}$, $X_{\pm \alpha_{15}}=w_{1}X_{\pm \alpha_{13}}w_{1}^{-1}$, $X_{\pm \alpha_{17}}=w_{2}X_{\pm \alpha_{15}}w_{2}^{-1}$, $X_{\pm \alpha_{19}}=w_{3}X_{\pm \alpha_{17}}w_{3}^{-1}$,
$X_{\pm \alpha_{21}}=w_{4}X_{\pm \alpha_{19}}w_{4}^{-1}$.

From Lemma \ref{prod2} we obtain  that the following positions of $Z$ are zeros:
$(48,48)$, $(48,46)$, $(46,46)$, $(46,48)$, $(46,44)$, $(44,44)$, $(44,46)$, $(44,42)$, $(42,42)$, $(42,44)$, $(42,38)$, $(38,38)$, $(38,42)$, $(48,44)$, $(44,48)$, $(46,42)$, $(42,46)$, $(44,38)$, $(38,44)$, $(48,42)$, $(42,48)$, $(46,38)$, $(38,46)$, $(24,2)$, $(2,24)$, $(48,38)$, $(38,48)$, $(24,49)$, $(49,24)$, $(46,34)$, $(34,46)$, $(48,36)$, $(36,48)$, $(48,34)$, $(34,48)$, $(44,24)$, $(24,44)$, $(48,30)$, $(30,48)$, $(48,28)$, $(28,48)$, $(38,51)$, $(51,38)$, $(48,24)$, $(24,48)$, $(48,16)$, $(16,48)$, $(48,10)$, $(10,48)$, $(48,2)$, $(2,48)$, $(48,49)$, $(49,48)$.

Suppose that we fixed the obtained uniform linear system of equation. Recall that our aim is to show that all values $z_{i,j}$, $a_{s,t}, b_{s,t}, c_{s,t}$ are equal to zero.

Consider the first condition.
It implies $a_{4,1}=0$ (pos.~$(42,42)$);  $a_{1,1}=0$ (pos.~$(48,48)$); $a_{3,1}=0$ (pos.~$(38,38)$); $a_{2,1}=0$ (pos.~$(39,39)$). Therefore,  $T_1,T_2,T_3,T_4$ do not entry to this condition. Later,     $c_{1,1}=0$ (pos.~$(3,9)$);    $b_{2,1}=0$ (pos.~$(3,51)$); $c_{2,1}=0$ (pos.~$(46,44)$);  $b_{3,1}=0$ (pos.~$(5,51)$); $c_{3,1}=0$ (pos.~$(6,51)$); $b_{4,1}=0$ (pos.~$(7,51)$); $c_{4,1}=0$ (pos.~$(8,51)$); $b_{5,1}=0$ (pos.~$(44,48)$); $c_{5,1}=0$ (pos.~$(10,51)$); $b_{6,1}=0$ (pos.~$(3,6)$); $c_{6,1}=0$ (pos.~$(46,42)$); $b_{7,1}=0$ (pos.~$(13,51)$); $c_{7,1}=0$ (pos.~$(14,51)$); $b_{8,1}=0$ (pos.~$(42,48)$);  $c_{8,1}=0$ (pos.~$(16,52)$); $b_{9,1}=0$ (pos.~$(17,51)$); $c_{9,1}=0$ (pos.~$(46,38)$); $b_{10,1}=0$ (pos.~$(19,51)$); $b_{11,1}=0$ (pos.~$(38,48)$); $c_{11,1}=0$ (pos.~$(22,51)$);  $c_{12,1}=0$ (pos.~$(24,51)$);
$b_{13,1}=0$ (pos.~$(25,51)$); $c_{13,1}=0$ (pos.~$(46,34)$);  $b_{14,1}=0$ (pos.~$(27,52)$); $c_{14,1}=0$ (pos.~$(28,51)$); $b_{15,1}=0$ (pos.~$(34,48)$); $c_{15,1}=0$ (pos.~$(30,51)$); $b_{16,1}=0$ (pos.~$(31,52)$); $c_{16,1}=0$ (pos.~$(46,28)$); $b_{17,1}=0$ (pos.~$(33,51)$); $c_{17,1}=0$ (pos.~$(34,51)$); $b_{18,1}=0$ (pos.~$(20,44)$); $c_{18,1}=0$ (pos.~$(36,52)$); $b_{19,1}=0$ (pos.~$(37,51)$);  $c_{19,1}=0$ (pos.~$(38,51)$); $b_{20,1}=0$ (pos.~$(39,51)$); $c_{20,1}=0$ (pos.~$(40,51)$); $b_{21,1}=0$ (pos.~$(41,52)$); $c_{21,1}=0$ (pos.~$(42,52)$);   $b_{22,1}=0$ (pos.~$(43,51)$); $c_{22,1}=0$ (pos.~$(44,51)$); $b_{23,1}=0$ (pos.~$(3,44)$); $c_{24,1}=0$ (pos.~$(10,43)$).

Consequently the right side of the condition contains only $X_{\alpha_{12}}$, $X_{\alpha_{24}}$, $X_{-\alpha_{10}}$, $X_{-\alpha_{23}}$, the condition itself is simplified, many elements of~$Z$ are equal to zero. Firstly, these are elements on the positions $(i,j)$, $i=2,3,5,6,7,8,10,11,13,14$, $16,17,19,22,24$, $25,27,28,30,31$, $33,36,37,38$, $39,40,41,42$, $43,44,45,48,50,51,52$, $j=1,4,9,12,15$, $18,20,21,23$, $26,29,32$, $35,46,47,49$ (except $z_{6,15}=c_{10,1}$, $z_{5,12}=b_{12,1}$, $z_{7,29}=c_{10,1}$, $z_{8,26}=b_{12,1}$, $z_{24,49}=-c_{10,1}$, $z_{28,35}=c_{23,1}$, $z_{27,32}=b_{24,1}$,  $z_{33,26}=-b_{24,1}$, $z_{34,29}=-c_{23,1}$, $z_{37,18}=b_{24,1}$, $z_{38,21}=c_{23,1}$, $z_{38,18}=c_{10,1}$, $z_{39,47}=-c_{10,1}$, $z_{39,20}=b_{24,1}$, $z_{40,23}=c_{23,1}$, $z_{41,12}=-b_{24,1}$, $z_{42,15}=-c_{23,1}$, $z_{43,4}=b_{24,1}$, $z_{44,9}=c_{23,1}$, $z_{45,49}=-b_{24,1}$).

When we make these elements equal to zero, we see that $b_{12,1}=0$ (pos.~$(19,2)$), $c_{10,1}=0$ (pos.~$(44,36)$, $b_{24,1}=0$ (pos.~$(45,2)$), $c_{23,1}=0$ (pos.~$(48,2)$, i.e., the condition now looks as $x_{\alpha_1}(1)Z=Zx_{\alpha_1}(1)$.
 By similar way finally all our conditions become of the form $x_{\pm\alpha_p}(1)Z=Zx_{\pm\alpha_p}(1)$, $p=1,\dots,4$. Since the centralizer of the given eight matrices consists of scalar matrices, and the matrix  $Z$ has a zero element $z_{52,52}$, we have that $Z=0$, what we need.

 Theorem~\ref{norm} is proved.
\end{proof}

From Theorems 1 and 2 directly follows the main theorem of the paper:

\begin{theorem}\label{main}
Let $G(R)$ be a Chevalley group with root system $F_4$, where $R$ is a local ring with~$1/2$. Then every automorphism of~$G(R)$ is standard, i.\,e., it is a composition of ring and inner automorphisms.
\end{theorem}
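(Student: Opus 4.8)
The plan is to obtain Theorem~\ref{main} as an immediate synthesis of the two principal results already proved, namely Theorem~\ref{first} and Theorem~\ref{norm}. The entire substance of the argument has been absorbed into those two theorems, so at this stage only a short chaining of statements remains. First I would fix an arbitrary automorphism $\varphi$ of $G(R)$ and apply Theorem~\ref{first}, which asserts that $\varphi$ decomposes as a composition of a ring automorphism $\rho$ and an automorphism--conjugation $i_C$, induced by some matrix $C\in\GL(V)$ lying in the normalizer of $G(R)$.

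Next I would invoke Theorem~\ref{norm}, according to which every automorphism--conjugation of a Chevalley group of type $F_4$ over a local ring with $1/2$ is in fact inner. Applying this to $i_C$ produces an element $g\in G(R)$ with $i_C=i_g$. Substituting back into the decomposition from Theorem~\ref{first} gives $\varphi$ as a composition of the ring automorphism $\rho$ and the inner automorphism $i_g$. By the definition of standard automorphisms introduced in Section~1, this is precisely the assertion that $\varphi$ is standard.

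There is essentially no remaining obstacle, since the hard analysis lives in Theorems~\ref{first} and~\ref{norm}: the former required transporting the initial automorphism to a special isomorphism and pinning down the images of the generators $w_{\alpha_i}$ and $x_{\alpha_i}(t)$, while the latter rested on the normalizer computation and Lemma~\ref{prod2}. The only points meriting a moment's care in the final step are bookkeeping ones: confirming that the order of composition is consistent across the two theorems, and noting that no central or graph automorphisms can intervene. The latter is already guaranteed by the remark in Section~1 that for the root system $F_4$ such automorphisms do not appear, so the composition of a ring automorphism and an inner automorphism exhausts the standard ones. This completes the proof of Theorem~\ref{main}.
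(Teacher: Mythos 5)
Your proposal is correct and is exactly the paper's argument: the paper derives Theorem~\ref{main} immediately by combining Theorem~\ref{first} (every automorphism is a ring automorphism composed with an automorphism--conjugation) with Theorem~\ref{norm} (every automorphism--conjugation is inner). Nothing further is needed.
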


\end{document}